\documentclass[hidelinks,onefignum,onetabnum]{siamart251104}

\usepackage{lipsum}
\usepackage{amsfonts}
\usepackage{graphicx}
\usepackage{epstopdf}
\usepackage{algorithm,algorithmic}
\usepackage{verbatim}
\usepackage{multirow}
\usepackage{subcaption}
\usepackage{enumitem}
\usepackage{amsmath,booktabs,ctable,threeparttable}
\usepackage{amssymb,boxedminipage}
\usepackage{microtype}
\allowdisplaybreaks[4]
\ifpdf
  \DeclareGraphicsExtensions{.eps,.pdf,.png,.jpg}
\else
  \DeclareGraphicsExtensions{.eps}
\fi

\newsiamremark{remark}{Remark}
\newsiamremark{hypothesis}{Hypothesis}
\crefname{hypothesis}{Hypothesis}{Hypotheses}
\newsiamthm{claim}{Claim}
\newsiamremark{fact}{Fact}
\crefname{fact}{Fact}{Facts}

\crefname{hypothesis}{Hypothesis}{Hypotheses}
\usepackage{amsopn}

\DeclareMathOperator{\Range}{\mathcal{R}}

\newcommand{\Complex}{\mathbb{C}}

\newcommand{\norm}[2][]{\Vert {#2} \Vert_{#1}}

\newcommand{\abs}[1]{\vert {#1} \vert}

\headers{A REFINED CJ--SS--RR METHOD}{Z. Jia and T. Liu}

\title{A refined CJ--SS--RR method with a reliable removal approach of
spurious Ritz values for the Hermitian eigenvalue problem\thanks{Submitted to the editors DATE.
\funding{The work was supported by the National Natural Science Foundation of China (NSFC) under
grant 12571404.}}}

\author{Zhongxiao Jia\thanks{Corresponding author. Department of Mathematical Sciences, Tsinghua University, 100084 Beijing, China
  (\email{jiazx@tsinghua.edu.cn}).}
\and Tianhang Liu\thanks{Department of Mathematical Sciences, Tsinghua University, 100084 Beijing, China
  (\email{lth21@mails.tsinghua.edu.cn}).}
}

\begin{document}
\maketitle

\begin{abstract}
  Under the hypothesis that the deviations of the desired eigenvectors of the matrix 
  $A$ from
  the underlying subspace tend to zero, the Ritz vectors may not converge 
  and have poor or little accuracy.  This phenomenon is not unusual and particularly occurs 
  when the associated Ritz values are close, which is independent of 
  the eigenvalue distribution of $A$.
  %There are more Ritz values that approximate
  %the same desired eigenvalues, or some of the Ritz values do not approximate any eigenvalue(s).
  For the (block) SS--RR methods, there are possibly
  {\em more} Ritz values that converge to the same desired eigenvalue(s) counting
  multiplicity in the region of interest, meaning that some of the Ritz values must
  be spurious and the corresponding
  residual norms of the Ritz pairs may not be small.
  %The second spurious case is that some of the Ritz values
  %in the region do not approximate {\em any} eigenvalues.
  Consequently, the (block) SS--RR methods
  including the CJ--SS--RR method cannot base on the corresponding residual norms to
  effectively identify if the Ritz values
  in the region are genuine or spurious.
  %These severely affect the effectiveness and efficiency of the (block) SS--RR methods.
  This paper proposes refined SS--RR, abbreviated as SS--RRR, methods based on the refined Rayleigh--Ritz
  projection that compute the eigenpairs of large matrices with the eigenvalues located
  in the given region. We present a new approach to accurately implement the RRR methods more efficiently
  than ever before for a general subspace.
  Exploiting the unconditional convergence of the refined Ritz vectors when
  the subspace is sufficiently
  accurate, we propose a tune-free removal approach to effectively remove
  spurious Ritz values with a rigorous theory supported, and develop a restarted CJ--SS--RRR algorithm.
  Numerical experiments show that the restarted CJ--SS--RRR algorithm is more efficient
  and effective than the restarted CJ--SS--RR algorithm.
\end{abstract}

\begin{keywords}
CJ series expansion, CJ--SS--RR, CJ--SS--RRR,
Ritz value, Ritz vector, refined Ritz vector, removal approach, contour integral, moment
\end{keywords}

\begin{AMS}
		65F15, 15A18, 65F10, 41A10
\end{AMS}

\section{Introduction}
Consider the following eigenvalue problem of a large-scale Hermitian matrix $A \in \mathbb{C}^{n \times n}$:
\begin{displaymath}
  A x = \lambda x \mbox{\ \  with \ } x^H x = 1,
\end{displaymath}
where the superscript $H$ denotes the conjugate transpose of a vector or matrix.
Given an interval $[a, b]$ contained in the spectrum interval of $A$, we are required
to compute all the eigenvalues $\lambda\in [a, b]$ counting multiplicities and their
corresponding eigenvectors.
Such kind of problems arises from numerous applications, such as
computing eigenpairs within an energy window near the Fermi level \cite{Martin_Electronic_Structure},
extracting electromagnetic modes in target frequency bands \cite{Jin_FEM_EM},
and locating unstable/weakly damped modes in prescribed spectral regions \cite{Trefethen_Embree},
to mention a few.

For matrices of small or medium size, there have been some well-established methods
for computing all the eigenvalues and eigenvectors, such as the QR algorithm and divide-and-conquer algorithm
\cite{bai2000,Golub_Matrix,Stewart_Eigen}, which are considered to be ``direct'' methods.
For large-scale matrices,
a specific portion of eigenvalues and eigenvectors is desired in applications.
In the Hermitian case, there have been a few standard projection methods available, which include
the subspace iteration method for computing several largest eigenvalues in magnitude and the
corresponding eigenvectors or invariant subspace,
the Lanczos method and the LOBPCG method for computing extreme eigenpairs,
and the rational Krylov method and the Jacobi--Davidson method for computing the eigenvalues closest to a given
target point $\tau$ and the corresponding eigenvectors
\cite{bai2000,Golub_Matrix,Parlett_Symmetric,Saad_Eigen,Stewart_Eigen}.

Over the past two decades, a new class of numerical methods based on contour integration and rational filtering have been developed to compute all the eigenpairs of a large matrix (pair) with the
eigenvalues in a given region or interval.
In 2003, Sakurai and Sugiura \cite{SS_Hankel} proposed the SS--Hankel method to solve the generalized eigenvalue problem of non-Hermitian matrix pairs based on contour integration and numerical quadrature.
The method computes a series of the zeroth to higher-order moment matrices via numerical quadrature to approximate the contour integrals, generates a certain Hankel matrix pair, and solves the resulting
generalized eigenvalue problem to obtain approximate eigenpairs of the original matrix (pair).
The SS--Hankel method is equivalent to performing projection with a non-orthonormal basis of the search subspace \cite{SS_review}, and is numerical unstable due to the ill-conditioning of Hankel matrices \cite{SS_RR}.
Subsequently, the SS--RR method \cite{SS_RR} and the SS--Arnoldi method \cite{SS_Arnoldi} were
proposed to improve the numerical stability and increase the accuracy of approximate eigenpairs.
The SS--RR method generates an orthonormal basis of the search subspace via
a truncated singular value decomposition (TSVD), and performs projection onto the resulting
subspace with {\em possibly smaller} dimension.
The SS--Arnoldi method \cite{SS_Arnoldi} generates an orthonormal
basis of the search subspace using a block Arnoldi-type process, realizes the Rayleigh--Ritz projection,
and extract approximate eigenpairs from the search subspace.
All these methods include single vector and block versions \cite{block_SS_RR,SS_theory2010,SS_Arnoldi}.
A software has been available \cite{SS_package} for these SS-type methods.

The FEAST method was initially proposed by Polizzi \cite{FEAST2009} to solve generalized Hermitian eigenvalue problems using contour integration and numerical quadrature, and has been rigorously
analyzed and developed by Tang and Polizzi \cite{FEAST_SI}.
Unlike the (block) SS--RR method, the FEAST method only constructs the zeroth-order moment matrix
by numerical quadrature, which is an approximate spectral projector corresponding
to the eigenvalues in the interval of interest; the method then
applies subspace iteration to the approximate spectral projector,
generates a sequence of increasingly better subspaces with fixed dimension,
and obtains approximate eigenpairs from
them until convergence.
Subsequently, G\"{u}ttel et al.\cite{FEAST_Zolotarev} proposed a variant of the FEAST method that employs
Zolotarev rational functions to construct possibly more accurate approximate spectral projectors. In the meantime,
the FEAST method has been extended to non-Hermitian matrices using the right and left approximate spectral projectors \cite{FEAST_non_Hermitian} or the oblique projection \cite{FEAST_oblique}.
A contour integral-based FEAST software has been available \cite{FEAST_package}.
%In recent years, instead of using numerical quadrature or rational filtering, Jia and Zhang %\cite{CJ_FEAST_cross,CJ_FEAST_augmented,CJ_FEAST_GSVD} have exploited Chebyshev--Jackson (CJ) series
%expansions to construct approximate projectors of the eigenvalues corresponding to the singular values
%and generalized singular values in a given interval, and proposed CJ-FEAST type solvers for the SVD and %GSVD problems, which are much more numerically
%stable and efficient than corresponding contour integral-based FEAST solvers.

At each iteration the (block) SS--RR and FEAST eigensolvers require solving a number of large
complex shifted linear systems with the quadrature nodes as shifts.
Since the region of interest is inside the spectrum of $A$, these linear systems may be
highly indefinite, thereby posing severe challenges for commonly used Krylov
iterative solvers such as the GMRES method
and the BiCGStab method \cite{Saad_Linear}.
Moreover, once a shift is close to some eigenvalue of $A$, the corresponding
shifted linear systems are
ill conditioned, causing that the accuracy of computed solutions
may not suffice to ensure the convergence of the contour integral-based eigensolvers
or the methods cannot achieve a reasonably user-prescribed convergence tolerance
\cite{CJ_FEAST_cross,CJ_FEAST_augmented,CJ_FEAST_GSVD}, even though they are solved accurately
in finite precision arithmetic.

To address the afore-mentioned issues, several variants have emerged recently. For the SVD and generalized
SVD (GSVD)
problems with the singular values and generalized singular values in a given interval,
Jia and Zhang \cite{CJ_FEAST_cross,CJ_FEAST_augmented,CJ_FEAST_GSVD} have
proposed the two CJ-FEAST SVDsolvers and one GSVDsolver that exploit the
Chebyshev--Jackson (CJ) polynomial series expansion to approximate certain
underlying spectral projectors. The solvers avoid
solving any shifted linear system, only compute matrix-matrix
products, and are thus more stable and accurate.
They have established the convergence theory of the CJ-FEAST SVDsolvers and GSVDsolver, and developed
the algorithms with several issues addressed,
which are numerically confirmed to be much more efficient and robust than the corresponding
contour integral-based FEAST solvers for the SVD and GSVD
computations. These CJ-FEAST methods are directly adaptable to the eigenvalue problems of
Hermitian matrices and generalized Hermitian positive definite matrix pairs.
Subsequently, the authors of this paper have used CJ series expansions to approximate
higher-order moments involved in the (block) SS--RR methods, established compact quantitative approximation
results, and proposed the CJ--SS--RR algorithm by considering several
important issues untouched or incorrectly taken for granted previously. For instance,
instead of monomial moments, we have resorted
to the Chebyshev moments to greatly improve numerical stability and robustness of the SS--RR method \cite{CJ_SS_RR}.
The CJ--SS--RR method approximates the contour integrals using polynomials instead of
rational functions, thereby avoiding
solving difficult large linear systems and improving the attainable
accuracy of the SS--RR methods greatly. The solvers have numerically been
confirmed to be much more
efficient than the corresponding contour integral-based solvers.

The common framework of projection methods consists of two main
ingredients: the construction of a search subspace and the choice of a projection method
or extraction approach used to extract approximate eigenpairs from the subspace.
The FEAST and SS--RR methods for the eigenvalue problem
are standard Rayleigh--Ritz projection methods \cite{Parlett_Symmetric,Stewart_Eigen}
and compute Ritz approximations, and they differ in the way that constructs a subspace.
The convergence of the FEAST-type methods for the eigenvalue problem has been established in
\cite{FEAST_Zolotarev,FEAST_non_Hermitian,FEAST_SI}. For the SVD and GSVD problems,
Jia and Zhang \cite{CJ_FEAST_cross,CJ_FEAST_augmented,CJ_FEAST_GSVD} have proved that the Ritz pairs
obtained by the CJ-FEAST solvers must converge to the desired SVD and GSVD components
when the the degree of CJ series expansion is reasonably large.
The results hold for the eigenvalue problems of Hermitian matrices and generalized Hermitian positive definite matrix pairs.

Nevertheless, the convergence results on the Ritz pairs obtained by the FEAST-type methods
do {\em not} hold for the (block) SS--RR methods.
The FEAST-type methods are special instances of the SS--RR methods, and employ only the
zeroth moment. There are remarkable differences between them in convergence and implementations.
For the FEAST-type methods, under the natural hypothesis that all the desired
eigenvectors are not deficient in the starting subspace,
the {\em whole} search subspace converges to an invariant
subspace of the underlying matrix (pair), which ensures the
unconditional convergence of the Ritz pairs when a reasonable numerical
quadrature is used.
However, it is {\em not} the case for the SS--RR methods that employ the zeroth to higher-order
moments. It is known that their underlying search subspaces approximate block Krylov subspaces as
numerical quadrature converges \cite{SS_theory2010}. Particularly, Imakura et al.\cite{SS_theory2016}
have proved that such kind of search subspace is indeed a block Krylov subspace when the integral curve is
a circle and numerical quadrature is the trapezoidal rule.

Remarkably, it is extraordinarily {\em rare} that the whole block Krylov subspace
converges to an invariant
subspace of the matrix (pair). This is a fundamental distinction between the subspaces used
by the FEAST-type methods and those used by the SS--RR methods. Consequently, the Ritz vectors
obtained by the SS--RR methods may converge irregularly and
even may fail to converge under the hypothesis that the deviations of
desired eigenvectors from the underlying subspace tend to zero \cite{refined_RR_theory,jiastewart2001,Stewart_Eigen}.
The cause is that (i)
there may be more Ritz values, which are multiple or very close,
that approximate the same eigenvalue counting multiplicity and (ii)
there may be Ritz values that do {\em not} approximate any eigenvalues; see \cite{refined_RR_theory,jiastewart2001,Stewart_Eigen} for details. In case (i), the Ritz values
include genuine and spurious ones, and the Ritz values in case (ii) are simply spurious.
In case (i),
the spurious Ritz values severely affect the accuracy of all the corresponding Ritz vectors
in an irregular manner, and the residual norms of these Ritz pairs may or may not be small
even though the Ritz values
themselves have already converged. In case (ii), the residual norm of each spurious Ritz
pair is definitely not small. As a result,
when measuring
the accuracy of an approximate eigenpair in terms of its residual norm,
the Rayleigh--Ritz projection method itself cannot identify genuine and spurious Ritz values,
an intrinsic deficiency of the method.
%Such phenomena affect not only
%the reliability of the Rayleigh--Ritz projection but also its convergence and efficiency.

In computations, we have indeed observed that there are some already converged
Ritz values in the region of interest but not all the residual norms of Ritz pairs computed by
the SS--RR methods are small \cite{SS_2023}, meaning that the corresponding Ritz vectors do
not converge and some of the Ritz values may be spurious.
%the methods themselves cannot judge if the Ritz values have converged and thus
%cannot remove spurious Ritz values.
A nowadays commonly used approach to distinguishing the genuine Ritz values
from the spurious ones in the region is to
simply compute the residual norms and remove the ones whose corresponding
residual norms are not small. Based on the elaboration in the above paragraph,
such approach is unreliable and may not work well because the residual norms of those
genuine Ritz pairs may or may not be small whenever there are spurious Ritz values.
This indicates that
it does not exist a theoretical threshold for judging residual norms so as to effectively
identify
genuine and spurious Ritz values, and we may either remove some genuine Ritz
values or retain some spurious Ritz values for a user-prescribed threshold.
Consequently, counting the number of Ritz values in the region of interest using such approach
may deliver incorrect results when the methods terminate.
Such an improper identification may
%There are some other approaches to treating spurious Ritz values  %\cite{block_SS_RR,SS_Arnoldi,SS_2023,SS_RR,FEAST_oblique}, and we will describe them in \cref{sec:3} and
%show why they do not work well and
severely affect the effectiveness and efficiency of
the restarted SS--RR algorithms. %are mathematically incorrect and numerically infeasible.
%Moreover, the residual norms of the Ritz vectors corresponding to the spurious Ritz values are typically
%large.
%Consequently, when using the residual norms to judge convergence, these spurious Ritz pairs will impair the
%convergence behavior of the method.

%The researchers of \cite{SS_2023,FEAST_oblique} have proposed several approaches to remove the spurious
%Ritz pairs.
%However, as is elaborated above,
The possible occurrence of spurious Ritz pairs is intrinsic in the Rayleigh--Ritz
projection for a general subspace, and the residual norm-based removal approach
is empirical and has no theoretical support, and
its performance relies entirely on an unsound user-prescribed tolerance.
Therefore, it is appealing to consider other projection methods to fix the
aforementioned deficiencies of
the Rayleigh--Ritz projection and compute the desired eigenvalues
in the region more reliably and efficiently. As it will turn out,
the refined Rayleigh--Ritz projection method, also
called the refined extraction approach,
which was initially proposed in \cite{jia1997refined,jia1998refined},
developed in \cite{jia1999refined,jia2000refined,refined_RR_theory} and popularized in
\cite{bai2000,jiastewart2001,Stewart_Eigen}, can resolve these issues perfectly
and converge faster. Applying
the refined Rayleigh--Ritz projection to the subspace generated by the SS-RR methods including
CJ--SS--RR method, we
obtain refined SS-RR (SS--RRR) methods, which includes the refined CJ--SS--RR (CJ--SS--RRR) method.
We shall propose a new approach to
compute several refined Ritz vectors accurately, which is numerically stable but can be
substantially more efficient than the approach
in \cite{jia2000refined} for a general subspace. Using the refined Ritz pairs obtained by the SS--RRR methods,
we propose a reliable residual norm-based removal approach that distinguishes
the genuine refined Ritz values from the spurious ones once they start to converge.
Unlike the SS--RR methods,
we establish a rigorous theoretical basis for the new removal approach.
Our removal approach based on CJ--SS--RRR is tune free, requires no auxiliary user-prescribed
threshold, cluster those approximating nearby refined Ritz values automatically,
and remove the spurious ones reliably if any
as they start to converge.

The paper is organized as follows. In \cref{sec:2}, we review the CJ--SS--RR method.
\Cref{sec:3} considers the removal issue of spurious Ritz values and reveals intrinsic limitations of
the (block) SS--RR methods.
In \cref{sec:4,sec: implementation}, we present the CJ--SS--RRR method, consider its
implementations, and
show how to effectively remove those extra refined Ritz vectors and retain
genuine Ritz values exactly.
Numerical experiments are reported in \cref{sec:6} to illustrate the effectiveness
of the proposed removal approach and the higher efficiency of the restarted
CJ-SS-RRR algorithm than the restarted CJ-SS-RR algorithm. Finally, \cref{sec:7} concludes the paper.

Throughout the paper, denote by $\norm{\cdot}$ the vector 2-norm or the matrix spectral norm,
by $\kappa(\cdot)$ the spectral norm condition number of a matrix,
by $\mathcal{R}(X)$ the column space of a matrix $X$,
by $\lambda(A)$ the set of the eigenvalues of $A$,
and by $\lambda_{\min}$ and $\lambda_{\max}$ the minimum and maximum eigenvalues of $A$, respectively.

\section{The CJ--SS--RR method}\label{sec:2}

% In this section, we review the theoretical results of CJ--SS--RR method established in \cite{CJ_SS_RR} and some results about CJ series approximation retrieved from \cite{CJ_FEAST_cross}.
We review the CJ series expansion and the CJ--SS--RR method in \cite{CJ_SS_RR}.
Suppose that $[\lambda_{\min},\lambda_{\max}]=[-1,1]$.
For a given interval $[a, b] \subset [-1, 1]$, define the step function
\begin{equation} \label{eq: def: step-function}
  h(t) = \begin{cases}
    1,           & t \in (a, b) , \\
    \frac{1}{2}, & t \in \{ a, b \}, \\
    0,           & t \in [-1, 1] \setminus [a, b].
  \end{cases}
\end{equation}
Suppose that $n_{ev}$ is the number of the eigenvalues $\lambda\in [a, b]$ of
$A$ counting multiplicities,
and choose positive integers $M, \ell$ such that $M\ell\geq n_{ev}$.

For a real algebraic polynomial $p$ of degree not exceeding $M-1$,
let the operator $F_d$ be the $d$-degree CJ polynomial series
expansion approximation to $p(t)h(t)$ \cite{CJ_SS_RR}:
\begin{equation} \label{eq: def: F_d}
  F_d(p)(t) := \frac{c_0}{2} +  \sum_{j=1}^d \rho_{j, d} c_j T_j(t) \approx p(t)h(t),
\end{equation}
where  $T_j$ is the $j$-degree Chebyshev polynomial of the first kind \cite{ApproxIntroduction}, the
coefficients
\begin{equation} \label{eq: def: Chebyshev coefficient}
  c_j = \frac{2}{\pi} \int_{-1}^1 \frac{p(t)h(t) T_j(t)}{\sqrt{1-t^2}}  {\rm d} t = \frac{2}{\pi} \int_a^b \frac{p(t) T_j(t)}{\sqrt{1-t^2}} {\rm d} t,
\end{equation}
and the Jackson damping factors (cf. \cite{eig_count_Saad,Jackson_damp})
\begin{equation} \label{eq: def: Jackson damping factor}
  \rho_{j, d} = \frac{\sin((j+1)\alpha_d)}{(d+2)\sin(\alpha_d)} + \left( 1-\frac{j+1}{d+2} \right) \cos(j\alpha_d), \quad \text{where } \alpha_d = \frac{\pi}{d+2}.
\end{equation}

For a given starting matrix $V\in \Complex^{n\times \ell}$ of full column rank, define the approximations
to the zeroth to $(M-1)$-order moment matrices by
% \begin{equation} \label{eq: def: polynomial approximation of S}
%   S = (S_0, S_1, \ldots, S_{M-1}), \quad \text{where } S_k = F_d(p_k)(A)V, \ p_k(t) = T_k \left( \frac{2t-a-b}{b-a} \right),
% \end{equation}
% where $V\in \Complex^{n\times \ell}$ is a starting matrix.
\begin{equation} \label{eq: def: polynomial approximation of S}
  \begin{aligned}
    &S_k = F_d(p_k)(A)V, \quad k = 0, 1, \ldots, M-1, \\
    &S = (S_0, S_1, \dots, S_{M-1}),
  \end{aligned}
\end{equation}
where $p_k(t) = T_k \left( (2t-a-b)/(b-a) \right)$ are shifted-and-scaled $k$-degree
Chebyshev polynomials.
The column space $\Range(S)$ is the search subspace used in the CJ--SS--RR method.
We remark that, in the SS--RR methods,
$S_i, i=0, 1, \ldots, M-1$ are generated by some contour integral-based numerical
quadrature, e.g., the the trapezoidal rule, where %shifted-and-scaled
monomials are used in the $M$ contour integrals \cite{block_SS_RR,SS_RR}.
To make the CJ--SS--RR method work correctly, the degree $d$ must be properly large.
Based on \cite[Theorem 4.6 and section 5.2]{CJ_SS_RR},
the authors in \cite{CJ_SS_RR} give a
practical estimation of CJ series degree:
\begin{equation}\label{eq: degree estimate for practice}
  d\geq \left\lceil \frac{D\pi^2}{(b-a)^{4/3}} + \frac{\pi^2(M-1)^2}{K^2(b-a)} \right\rceil - 2
\end{equation}
with $D \in [1, 8]$ and $K \in [1, 10]$ depending on the eigenvalue distribution of $A$.

Generally, $[\lambda_{\min},\lambda_{\max}]\not=[-1,1]$.
Note that the linear transformation
\begin{displaymath}
  l(t) = \frac{2t - \lambda_{\max} - \lambda_{\min}}{\lambda_{\max} - \lambda_{\min}}
\end{displaymath}
maps $[\lambda_{\min}, \lambda_{\max}]$ to $[-1, 1]$. In applications, it is
only necessary to replace $t, a, b$ and $A$
by $l(t),l(a), l(b)$ and $l(A)$, respectively.

The abbreviation ``RR'' in the (block) SS--RR method stands for
the Rayleigh--Ritz projection method: %, which is described as follows:
\begin{equation} \label{eq: def: Rayleigh--Ritz projection}
  \begin{cases}
    \tilde{x} \in \mathcal{U}, \\
    A \tilde{x} - \tilde{\lambda} \tilde{x} \perp \mathcal{U}, \\
  \end{cases}
\end{equation}
where $\tilde{\lambda}$ and $\tilde{x}$ with $\|\tilde x\|=1$
are called the Ritz values and Ritz vectors of $A$ with respect to
a given search subspace $\mathcal{U}$.
Let the columns of $U$ form an orthonormal basis of $\mathcal{U}$.
Then the Ritz pairs are computed by solving the projected eigenvalue problem:
\begin{equation} \label{eq: def: Rayleigh--Ritz procedure}
  U^H A U y = \tilde{\lambda} y, \quad \tilde{x} = U y \mbox{\ \ with \ \ } \|y\|=1.
\end{equation}
When $\mathcal{U}=\Range(S)$ with $S$ generated by a contour integral-based numerical
quadrature, the resulting method is a SS--RR method; we obtain the CJ--SS--RR
method when $S$ is defined by \eqref{eq: def: polynomial approximation of S}.

\cref{alg: CJSSRR} describes a restarted CJ--SS--RR algorithm \cite{CJ_SS_RR}.
In step \ref{step: CJSSRR: convergence test}, the relative residual norm of a Ritz pair $(\tilde{\lambda}, \tilde{x})$ is defined as
\begin{equation}\label{relres}
  \frac{\norm{A\tilde{x} - \tilde{\lambda}\tilde{x}}}{\norm{A}},
\end{equation}
and the stopping criteria in \cite{FEAST_oblique} are used to test convergence;
a Ritz pair with $\tilde{\lambda}\in [a, b]$,
and relative residual norm smaller than some user-prescribed
threshold $\delta$ is identified as desired. In computations, the denominator
$\|A\|$ can be replaced by $(\|A\|_1\|A\|_{\infty})^{1/2}$, where $\|\cdot\|_1$
and $\|\cdot\|_{\infty}$ denote the 1-norm and the infinity norm, respectively.
The algorithm is stopped when the relative residual
norms of the desired Ritz pairs are smaller than
a prescribed tolerance $tol$.
In step \ref{step: CJSSRR: restart}, our restart strategy is the same as
the approach proposed in
\cite{SS_theory2016,SS_review,SS_restart1} and takes
$V^{(k+1)} = F_d(1)(A)V^{(k)}$, which is the first $\ell$ columns of $S^{(k)}$,
given that $p_0(t) = 1$ in \cref{eq: def: polynomial approximation
of S}.

In the original SS--RR methods, the moment matrices $S_0,\ldots,S_{M-1}$ similar
to those in \eqref{eq: def: polynomial approximation of S} are approximations to
the moment matrices generated by ill-conditioned monomials.
In finite precision arithmetic, for numerical stability,
the corresponding step 4
uses the TSVD to generate an effective subspace;
see \cite{SS_package,SS_2023} and the references therein. In \cite{CJ_SS_RR},
we have replaced the monomials by the much better conditioned Chebyshev polynomials
in \eqref{eq: def: polynomial approximation of S}.

\begin{algorithm}
\caption{The restarted CJ--SS--RR algorithm}
\label{alg: CJSSRR}
\begin{algorithmic}[1]
  \REQUIRE The Hermitian matrix $A$, the real interval $[a, b]$, the CJ series degree $d$, the moment number $M$, an $n$-by-$\ell$ starting
  matrix $V=V^{(0)}$ in \eqref{eq: def: polynomial approximation of S}
  with $M\ell \geq n_{ev}$, and the stopping tolerance $tol$.
  \STATE Compute the CJ series coefficients by \cref{eq: def: Chebyshev coefficient,eq: def: Jackson damping factor}.
  \FOR {$k = 0, 1, \ldots$}
    \STATE Compute $S^{(k)}=(S_0^{(k)},S_1^{(k)},\ldots,S_k^{(k)})$
    by \cref{eq: def: polynomial approximation of S}.
    \STATE Compute the QR factorization: $S^{(k)} = U^{(k)}R^{(k)}$. \label{step: CJSSRR: QR}
    \STATE Apply the Rayleigh--Ritz projection \cref{eq: def: Rayleigh--Ritz procedure} to  $\mathcal{R}(U^{(k)})$, and compute the Ritz pairs $(\tilde{\lambda}_i^{(k)}, \tilde{x}_i^{(k)}), i = 1, 2, \ldots, M\ell$. \label{step: CJSSRR: RR}
    \STATE Compute the relative residual norms \eqref{relres}, and test convergence. \label{step: CJSSRR: convergence test}
    \STATE If not converged, set the updated starting matrix $V^{(k+1)}$ to
    be the first $\ell$ columns of $S^{(k)}$, and go to step 2. \label{step: CJSSRR: restart}
  \ENDFOR
  \RETURN The converged $n_{ev}$ Ritz pairs $(\tilde{\lambda}_i^{(k)}, \tilde{x}_i^{(k)})$.
\end{algorithmic}
\end{algorithm}

\section{The removal approaches of spurious Ritz values in the SS--RR methods}
\label{sec:3}

Before we start the analysis,
we should point out that the occurrence of spurious Ritz values is universal,
regardless of whether or not $A$ is Hermitian.
Therefore, in this and next sections, we consider a general matrix.

%The (block) SS--RR and CJ--SS--RR methods use the Rayleigh--Ritz projection \cref{eq: def: Rayleigh--Ritz
%projection} to obtain the Ritz pairs $(\tilde{\lambda}_i, \tilde{x}_i)$.
It is well known that the necessary condition for the convergence of approximate eigenvectors
by a projection method
is that all the desired eigenvectors
are sufficiently close to the search subspace $\mathcal{U}$ \cite{jiastewart2001,Stewart_Eigen}.
We call such a $\mathcal{U}$ sufficiently accurate.
When the subspace dimension $M> n_{ev}$ and $\mathcal{U}$ is not sufficiently accurate,
it is well possible that there are either
more or fewer than $n_{ev}$ Ritz values inside the region of interest;
for $\mathcal{U}$ sufficiently accurate, there are at least $n_{ev}$ Ritz values
in the region because there are $n_{ev}$ Ritz values that converge unconditionally
\cite{jiastewart2001,Stewart_Eigen}.

%In this case, we should select at most $n_{ev}$ Ritz pairs from the $M\ell$ ones.
%In this case,
%a straightforward approach is to select the Ritz pairs with Ritz values in the interval of interest.
%However, the number of these Ritz pairs may be still larger than $n_{ev}$.

As we have addressed, there may be more Ritz values that
approximate the same eigenvalue of $A$ counting multiplicity,
meaning that these approximating Ritz values include both genuine and spurious when they
are in the region.
Some intuitive examples can be found in \cite{refined_RR_theory},
where there are multiple Ritz values
that are equal to a simple eigenvalue when the desired eigenvector exactly lies in the subspace or,
more generally, there are more nearly multiple Ritz values that approximate the same simple
eigenvalue of $A$ when the desired eigenvector almost lies in the subspace.
In these two cases, the Ritz vectors
are either not uniquely determined or, though unique, may have little accuracy, causing
that the residual norms of these Ritz pairs
may not be small though the Ritz values are already an exact eigenvalue or very accurate approximations
to the eigenvalue. Obviously, part of the approximating Ritz values
is spurious. The second kind of spurious Ritz value is that it is in the region but
does {\em not}
approximate any eigenvalue of $A$, which appears more likely when the region
is very inside the whole spectrum region of $A$ since the Ritz values are equal to
Rayleigh quotients $u^H A u$ and could be more likely
impersonated even though $u$'s do not
approximate any eigenvector of $A$.

At first glance,
it seems straightforward to identify the second
kind of spurious Ritz values by checking the residual norm of the corresponding Ritz pair,
which must not be small since it does not approximate
any eigenvalue of $A$. Unfortunately, it is not the case because the residual norms
of Ritz values of the first kind may not be small either, as we have elaborated above.
%But this issue is actually hard to settle
%down reliably in computation because one generally does not know the type of Ritz values in advance.

%
%The extra Ritz pairs, whose Ritz values in $[a, b]$ with corresponding Ritz vectors that
%do not necessarily approximate the eigenvectors, are called spurious in \cite{SS_2023,FEAST_oblique} but %without any precise definition.
In what follows we are devoted to a
reliable and accurate removal approach of spurious Ritz values in order
to retain only genuine ones.
We give the following definition for $A$ Hermitian, which is
straightforwardly adaptable to a general $A$ by replacing
the interval by a given region in the complex plane.

\begin{definition}
  \label{def: spurious Ritz vector}
  Given a Hermitian matrix $A$, an interval $[a, b] \subset [\lambda_{\min}, \lambda_{\max}]$ and a subspace $\mathcal{U}$, let $(\tilde{\lambda}, \tilde{x})$ be a Ritz pair of $A$ with respect to $\mathcal{U}$.
  Then
  \begin{itemize}
    \item[{\rm (i)}] $\tilde{x}$ is called a potential Ritz vector if $\tilde{\lambda} \in [a, b]$;
   % \item An eigenvector $x$ is desired if its corresponding eigenvalue $\lambda$ is in $[a, b]$;
    \item[{\rm (ii)}] For $\tilde{\lambda} \in [a, b]$, the Ritz vector $\tilde{x}$ is called desired if
    the residual norm of $(\tilde\lambda, \tilde x)$ is below some threshold $\delta$; % and the angle between $\tilde{x}$ and $x$ is the smallest among all potential Ritz vectors;
    \item[{\rm (iii)}] $\tilde{x}$ is called a spurious Ritz vector if it is a potential but not desired one.
  \end{itemize}
\end{definition}

The removal approach in \cite{SS_package,CJ_SS_RR,FEAST_oblique}
detects the spurious Ritz vectors by checking the relative residual norms of the potential Ritz vectors:
the one with a relative residual norm larger than a user-prescribed threshold $\delta$
is identified as spurious.

Unfortunately, as we have stated above, for the Rayleigh--Ritz method,
whenever there are spurious Ritz values in the region or interval of interest,
all the Ritz vectors corresponding to both the genuine and spurious Ritz values
may have poor or little accuracy when $\mathcal{U}$ contains
sufficiently accurate approximations to the desired eigenvectors, which is even true
when the desired eigenvectors $x\in \mathcal{U}$ exactly. In other words,
for the first kind of spurious Ritz values, there may be no Ritz vector corresponding to the (nearly) multiple Ritz values %, which include
%the genuine and spurious ones,
that is guaranteed to converge even if the desired eigenvector is sufficiently close to or exactly
lies in the subspace. These mean that there may be no desired Ritz vector for a given small $\delta$
even the Ritz values have converged and one,  therefore, cannot identify whether
the (nearly) multiple Ritz values are genuine or spurious. The difficulty is there does not exist
any reliable threshold $\delta$. As a consequence, for {\em any} given small $\delta$, one may remove
some genuine Ritz values or retain some spurious Ritz values, failing to find
the $n_{ev}$ desired eigenpairs correctly. For the second kind of spurious Ritz values, the
residual norms of Ritz pairs are definitely not small, but we do not know it is the Ritz vector
or both of the Ritz value and Ritz vector that lead to not small residual norms. Consequently,
the Rayleigh--Ritz method cannot base on residual norms to
include or exclude those Ritz values in the region.
Therefore, as a whole,
there is no reliable residual norm-based approach to identifying the spurious Ritz values and to
making the Rayleigh--Ritz method converge regularly,
and any residual norm-based removal approach of spurious Ritz
values and vectors has no solid theoretical guarantee.

A reliable identification of approximate eigenvalues
% or (generalized) singular values
in the region or interval is vital to the effectiveness and efficiency of
the (block) SS--RR methods including the CJ--SS--RR method.
Whenever either fewer or more than $n_{ev}$ Ritz pairs are
retained, the methods either fail to find all the $n_{ev}$ eigenpairs or delays
the convergence.

We now pay special attention to
step \ref{step: CJSSRR: QR} of a restarted SS--RR algorithm similar to \cref{alg: CJSSRR}.
For numerical stability, the original algorithm uses the TSVD of $S$ to retain the effective
information of $\mathcal{R}(S)$ in finite precision arithmetic; see
\cite{SS_package,SS_2023} and the references therein. There are a few deficiencies in such treatment.

\begin{itemize}
\item[(i)] %The exact determination of $n_{ev}$ is a very hard issue,
%and there has been no reliable and efficient approach to carrying out this task up to now.
Everything is possible when employing the TSVD to determine the numerical rank of $S$
for a user-prescribed truncation tolerance without considering
the mathematical property of $S$:
the numerical rank of $S$ may be bigger, smaller than or equal to $n_{ev}$, and it may well not equal the mathematical rank of $S$.
If the numerical rank is smaller than $n_{ev}$, the (block)
SS--RR methods cannot find the $n_{ev}$ desired eigenpairs.
%In terms of \cref{def: spurious Ritz vector},
%The approach simply retains all the $\tilde\lambda\in [a,b]$ without checking their
%residual norms.

\item[(ii)] The (block) SS--RR methods generate
the moments $S_i, i=0, 1, \ldots, M-1$ in \eqref{eq: def: polynomial approximation of S} using %shifted-and-scaled
monomials, and the resulting $S$ is typically very ill conditioned for a generic $\ell$.
As a result, as has been particularly addressed in \cite{CJ_SS_RR}, when ${\rm dim}(\Range(S))=M\ell$,
i.e., the mathematical rank of $S$ is $M\ell$,
the numerical rank obtained by the TSVD may
well be smaller than $n_{ev}$. This means that the methods artificially loses effective information
severely, as has been numerically confirmed for a number of test problems
in \cite{CJ_SS_RR}.

\item[(iii)] Extremely important is that,
as a basic principle, since the approximate eigenpairs obtained by a projection method
are {\em mathematically independent} of choices of basis vectors of $\mathcal{R}(S)$, one
must fully exploit $\mathcal{R}(S)$ rather than artificially reduce it
by using a truncation because of ill-conditioned basis vectors. Therefore,
it is vital to construct a well-conditioned basis of $\mathcal{R}(S)$ and
fully exploit all the effective information on $\mathcal{R}(S)$
in finite precision arithmetic.
\end{itemize}

Being directed against the above (iii), the authors in \cite{CJ_SS_RR} have proposed
to use much better-conditioned shifted-and-scaled Chebyshev moments to
replace ill-conditioned monomial
moments, as indicated by \eqref{eq: def: polynomial approximation of S}.
The numerical experiments have shown that,
in finite precision arithmetic,
the matrix $S$ in \eqref{eq: def: polynomial approximation of S} is quite well conditioned and always
has full column rank, meaning that the full effective information
on $\mathcal{R}(S)$ is retained.
%Notice that $S$
%in \eqref{eq: def: polynomial approximation of S} generates the same
%subspace $\mathcal{R}(S)$ as that using the shifted-and-scaled monomial moments.
More precisely, using the TSVD of $S$ retains all the effective
information and obtains a subspace whose dimension is equal to $M\ell$ for a threshold as small as
the level of machine precision.
%However, it also implies that we do not distinguish between genuine and
%spurious Ritz values since we simply take all the Ritz values in the region.
%On the contrary, for $S$ generated by the %shifted-and-scaled
%monomial moments, the numerical rank of $S$ is often reduced greatly and may be smaller than $n_{ev}$
%for a threshold at the level of machine precision.

Besides the residual norm-based removal approach, there is a simple TSVD approach \cite{SS_package}.
As we have mentioned previously, in computations
the SS--RR methods exploit the TSVD of $S$ to obtain an effective subspace and project
the eigenvalue problem of the original matrix (pair) onto the subspace. The TSVD approach simply retains all
the Ritz values in the region without checking their residual norms.
Rigorously speaking, it does not remove any Ritz values in the region and maintains
the natural form of a SS--RR method.

In summary, the residual norm-based removal approach of spurious
Ritz values are unreliable and may identify
the spurious Ritz pairs incorrectly for a given threshold $\delta$.
Its performance is unliable and has no theoretical guarantee.
Even if an appropriate threshold $\delta$ is set by chance, as soon as the matrix $A$ or the interval $[a, b]$
changes, the threshold may no longer be appropriate, thus leading to the incorrect detection of the
desired and spurious Ritz vectors.

%In summary, the aforementioned two removal approaches have severe deficiencies
%for the (block) SS--RR methods, and may work unreliably.

\section{The (block) SS--RRR methods}\label{sec:4}
\label{subsec: refined Rayleigh--Ritz projection}

%In order to deal with the troubles caused by spurious Ritz values,
We now review the refined Rayleigh--Ritz projection method proposed by Jia \cite{jia1997refined,jia1998refined,jia2000refined,refined_RR_theory} and
systematically accounted for in \cite{bai2000,Stewart_Eigen,book:VanDerVorst2002}:
For a Ritz value $\tilde{\lambda}$ approximating the eigenvalue $\lambda$,
find a unit length vector $\hat{x} \in \mathcal{U}$ that satisfies the residual optimality
\begin{equation} \label{eq: def: refined Rayleigh--Ritz projection}
  \norm{(A - \tilde{\lambda} I) \hat{x}}
  = \min_{w \in \mathcal{U} , \norm{w} = 1} \norm{(A - \tilde{\lambda} I) w},
\end{equation}
and use it to approximate $x$. The vector
$\hat x$ is called a refined Ritz vector or, more generally, a refined
eigenvector approximation to $x$ when $\tilde\lambda$ is replaced by any available
approximation to $\lambda$.  Specifically, when $\mathcal{U}=\mathcal{R}(S)$ generated by
the SS--RR methods or by the CJ--SS--RR method, we obtain the refined SS--RR, abbreviated as SS--RRR,
methods and the CJ--SS--RRR method.

Jia \cite{refined_RR_theory} has proved that $\norm{(A - \tilde{\lambda} I) \hat{x}} < \norm{(A - \tilde{\lambda} I) \tilde{x}}$, provided  $\norm{(A - \tilde{\lambda} I) \tilde{x}}\not=0$. Moreover, if there is other Ritz value close to $\tilde{\lambda}$,
then $\norm{(A - \tilde{\lambda} I) \hat{x}} \ll \norm{(A - \tilde{\lambda} I) \tilde{x}}$,
meaning that $\hat{x}$ is a much more accurate approximation to the eigenvector $x$ than $\tilde{x}$;
it is proved in \cite{jiastewart2001,Stewart_Eigen} that $\tilde\lambda\rightarrow \lambda$,
$\hat{x}\rightarrow x$, and the relative residual norm $\norm{(A - \tilde{\lambda} I) \hat{x}}/\|A\|$
tends to zero and is the same order size
as the deviation of $x$ from $\mathcal{U}$;
see \cite[Theorem 7.1]{jiastewart2001}. This is completely different
from the Ritz vector $\tilde{x}$, whose uniqueness requires that $\tilde{\lambda}$ be simple;
if there are multiple spurious $\tilde{\lambda}$, there are more than one Ritz vectors $\tilde{x}$'s as
approximations of $x$ with little accuracy, leading to the failure of the method. More generally,
if there are more nearby Ritz values
that approximate the same $\lambda$,
then the corresponding Ritz vectors either have little accuracy or wander with
uncertain accuracy,
so that the residual norms of Ritz pairs either are not small or uncertain.

In contrast, the convergence and accuracy
of the refined Ritz vector has nothing to do with spurious Ritz value(s), while those of
Ritz vectors critically depends on if there is a spurious Ritz value. Therefore,
in terms of \cref{def: spurious Ritz vector},
unlike the Ritz vectors, a potential refined Ritz vector $\hat x$
is always a desired one, and no spurious refined Ritz vector.
The residual norms of all the desired refined Ritz vectors tend to zero, and
the refined Ritz vectors converge to the desired eigenvectors $x$, as long as the deviations of
the desired eigenvectors $x$ from $\mathcal{U}$ tend to zero.

For a cluster of the nearby Ritz values consisting of possibly
genuine and spurious ones that approximate the same eigenvalue $\lambda$,
their corresponding refined Ritz vectors all converge
to the eigenspace associated with $\lambda$.
However, a new question arises:
If there are spurious Ritz values in the cluster,
the number of the corresponding
refined Ritz vectors must be bigger than the dimension of the eigenspace.
This means that these refined Ritz vectors must be nearly linearly dependent and
some of them are extra and must be removed.
Precisely,
suppose that $A$ is diagonalizable and $\lambda$ is $n_1$ multiple and the number of nearby Ritz values in
the cluster is $\hat n_1>n_1$. Then the numerical rank of the matrix consisting of the $\hat n_1$ refined Ritz
vectors for some {\em proper} threshold must equal $n_1$ under the natural
hypothesis that $\mathcal{U}$ contains
sufficiently good approximations to the $n_1$-dimensional eigenspace associated with
$\lambda$, and thus there are surely $\hat n_1-n_1$ spurious Ritz values. We thus remove $\hat n_1-n_1$ extra
refined Ritz vectors, aretain only $n_1$ ones, and obtain $n_1$ desired eigenpairs.
%These theoretical results and analysis provide us a principle to remove
%spurious Ritz values and extra refined Ritz vectors, as we show quantitatively
%in the next section.

In the next section, we will detail and quantify the above results,
and propose a reliable approach to removing possible
spurious Ritz values and extra refined Ritz vectors without a user-prescribed threshold.
The removal approach is tune free and only checks the residual norms of refined Ritz vectors.
It is unlike the SS--RR methods, which need an unsound and sensitive threshold $\delta$ that
is not directly related with the stopping tolerance $tol$.

\section{Effective and efficient implementations of the SS--RRR and CJ--SS--RRR methods}
\label{sec: implementation}

In \cref{subsec: computation of refined Ritz vectors,subsec: removal of extra refined vectors}, unless otherwise specified,
we assume that $A$ is a general diagonalizable matrix rather than require it to be Hermitian.
We will first present a new approach to compute the refined Ritz vectors
more efficiently than the SVD approach to be described
and more accurately than the cross-product matrix-based approach
in \cite{jia2000refined} for a general subspace,
% which is more efficient than that proposed in \cite{jia2000refined},
and then propose an approach to remove the spurious Ritz values
by removing the extra refined Ritz vectors.
% which is equivalent to
% removing the extra refined Ritz vectors,
% using the refined Rayleigh--Ritz projection \eqref{eq: def: refined Rayleigh--Ritz projection} for a general diagonalizable matrix $A$.

\subsection{A new more efficient computational approach of the refined Ritz vectors}
\label{subsec: computation of refined Ritz vectors}

We consider efficient and accurate computation of the refined Ritz vectors.
Suppose that $\mathcal{U}$ is an $M\ell$-dimensional subspace with $M\ell \geq n_{ev}$,
% the columns of $U$ form an orthonormal basis of $\mathcal{U}$,
the columns of $U$ form an orthonormal basis of $\mathcal{U}$,
and $(\tilde{\lambda}, \tilde{x})$ is a Ritz pair with respect to $\mathcal{U}$.
Then the minimization problem \eqref{eq: def: refined Rayleigh--Ritz projection} amounts to
\begin{equation} \label{eq: refined Rayleigh--Ritz projection in matrix form}
\norm{(A - \tilde{\lambda} I)\hat{x}}=  \norm{(A - \tilde{\lambda} I) U \hat{z}}
  = \min_{z \in \Complex^{M\ell}, \norm{z} = 1} \norm{(A - \tilde{\lambda} I) U z},
\end{equation}
where the refined Ritz vector $\hat{x} = U \hat{z}$ with $\hat z$ being the right
singular vector of $(A - \tilde{\lambda} I) U$ corresponding to its smallest
singular value \cite{jia2000refined,refined_RR_theory}.

For those $U$ generated by the (block) Arnoldi
process, one can compute $\hat{z}$ very efficiently and accurately via the SVD of
a small $(M+1)\ell\times M\ell$ (block) Hessenberg matrix \cite{jia1997refined,jia1998refined}.
For a general $U$, the most accurate and direct
way is to compute the compact SVD of the overly rectangular matrix $(A - \tilde{\lambda} I) U$
and obtain $\hat{z}$.
Suppose that there are $p$ Ritz values in the given region with
$p \leq M\ell$.
% It is worthwhile to point out that, in the beginning stage,
% it is well possible that $0\leq p<n_{ev}$ but we must have $p\geq n_{ev}$ as the $n_{ev}$
% desired eigenvectors tend to $\Range(U)$ because there are at least $n_{ev}$ Ritz values among
% $M\ell$ that converge
% the desired eigenvalues unconditionally \cite{jiastewart2001,Stewart_Eigen}.
% \textcolor{red}{If $p<n_{ev}$, the $p$ refined Ritz vectors are not enough for the refined RR method for
% the concerning eigenvalue problem, which is unlike the past problem
% that a known number of eigenpairs are desired. How to effectively and efficiently
% treat such an important issue is unclear.
% Certainly, one can first compute Ritz vectors to fix this deficiency until
% $p\geq n_{ev}$.}
Then, for $A$ and $\tilde{\lambda}$ real, the compact SVD computations of $p$ matrices
$(A - \tilde{\lambda} I) U$ cost
\begin{equation}\label{svdflops}
p(2n(M\ell)^2 + 11(M\ell)^3) = 2np(M\ell)^2 + 11p(M\ell)^3
\end{equation}
flops \cite[p.493]{Golub_Matrix}. We call this the SVD approach.
A more efficient but possibly little bit less accurate alternative is
to compute the eigenvector $\hat{z}$ of
the cross-product matrix $U^H(A - \tilde{\lambda} I)^H(A - \tilde{\lambda} I) U$ corresponding to its
smallest eigenvalue \cite{jia2000refined}.
%Alternatively, one more efficient but slightly less accurate
%way is to the eigendecomposition of the cross-product
%matrix $U^H(A - \tilde{\lambda} I)^H(A - \tilde{\lambda} I) U$ and obtain $\hat{z}$, which
%is the eigenvector of this matrix corresponding to its smallest eigenvalue \cite{jia2000refined}.

We now propose a new approach to compute $\hat{z}$ for a general $U$,
which is more efficient but as accurate as the SVD approach when computing the $p$ refined Ritz vectors.
Write \begin{displaymath}
  (A - \tilde{\lambda} I) U
  = \begin{pmatrix}  U & AU \end{pmatrix}
  \begin{pmatrix} -\tilde{\lambda}I \\ I \end{pmatrix},
\end{displaymath}
and compute the compact QR factorization
\begin{equation}\label{qrf}
  \begin{pmatrix}  U & AU \end{pmatrix}
  = Q \begin{pmatrix} R_{11} & R_{12} \\ 0 & R_{22} \end{pmatrix}
  = Q \begin{pmatrix}  J & H \end{pmatrix},
\end{equation}
where $R_{11}, R_{12}, R_{22}$ are $M\ell$-by-$M\ell$ matrices and $J, H$ are $2M\ell$-by-$M\ell$ matrices.
Then \eqref{eq: refined Rayleigh--Ritz projection in matrix form} is equivalent to
\begin{equation} \label{eq: refined Rayleigh--Ritz projection with QR}
  \norm{(A - \tilde{\lambda} I) U \hat{z}}
  = \min_{z \in \Complex^{M\ell}, \norm{z} = 1} \norm{(H - \tilde{\lambda} J) z}.
\end{equation}
Therefore, $\hat{z}$ is the right singular vector of the reduced matrix $H - \tilde{\lambda} J$
corresponding to its smallest singular value.
We only compute the Householder QR factorization \eqref{qrf} in factored form
once without explicitly forming
$Q$ at the cost of $2n(2M\ell)^2-2(2M\ell)^3/3\approx 2n(2M\ell)^2$ flops for $A$ real and
 $M\ell\ll n$
\cite[p.249]{Golub_Matrix}. The computation of the QR factorization \eqref{qrf}
and the compact SVDs of the $p$ reduced small
matrices $(H - \tilde{\lambda} J)$'s totally costs no more than
$$
2n(2M\ell)^2 + p(2(2M\ell)(M\ell)^2 + 11(M\ell)^3) = 8n(M\ell)^2 + 15p(M\ell)^3
$$
flops, much cheaper than the SVD approach for $M\ell \ll n$ and $p \gg 4$; see \eqref{svdflops}.
%For $A$ or $\tilde\lambda$ complex, the conclusion is similar.

\subsection{The removal approach of spurious Ritz values and extra refined Ritz vectors}
\label{subsec: removal of extra refined vectors}

As has been already known, if there is a spurious Ritz value in the cluster of nearly
Ritz values,
then the refined Ritz vectors must be nearly linearly dependent,
while the corresponding Ritz vectors may be messy since they
may converge irregularly or may not converge even if the deviation of the desired
eigenspace from $\mathcal{U}$ tends to zero.
Therefore, unlike the Ritz vectors,
the unconditional convergence of refined Ritz vectors enables us
to effectively check the closeness of the refined Ritz vectors,
identify spurious Ritz values and remove extra refined Ritz vectors.
As for the spurious Ritz values in the region that do not approximate any
eigenvalue of $A$, their associated refined Ritz vectors must not converge
to any eigenvector and the corresponding residual norms must not be small.
Therefore, it is straightforward to reliably remove such kind of spurious Ritz values
by checking the sizes of residual norms, which is unlike in the SS--RR methods.

In what follows it suffices to consider the case that
the refined Ritz vectors associated with a cluster of nearby Ritz values
approximating the same eigenvalue $\lambda$, and show how
to remove all the possible spurious Ritz values and extra refined Ritz vectors
and to retain the genuine ones accurately.
We will borrow the approach of the first author in \cite{jia_multiple} that deals
with a multiple eigenvalue and determines the multiplicity and
corresponding eigenspace, and
propose an approach to accurately removing extra refined Ritz vectors, i.e.,
spurious Ritz values.

Before describing the removal approach, we first introduce some notations.
Given a Ritz pair $(\tilde\lambda, \tilde x)$ and the corresponding
refined Ritz vector $\hat x$, we compute
\begin{equation} \label{eq: refined Ritz value and residual}
  \hat{\lambda} = \hat{x}^H A \hat{x}, \quad
  \hat{r} = A \hat{x} - \hat{\lambda} \hat{x}.
\end{equation}
Note that $\tilde\lambda=\tilde{x}^HA\tilde{x}$.
Since the refined Ritz value $\hat{\lambda}$ minimizes the residual norm $\norm{A \hat{x} - \mu \hat{x}}$
over $\mu\in \mathbb{C}$ and $\hat{x}$ is generally more accurate than
$\tilde{x}$, the refined Ritz value
$\hat\lambda$ is generally more accurate than $\tilde\lambda$ as an approximation to
$\lambda$. On the other hand, since $\hat{x}$ satisfies the residual optimality \eqref{eq: def: refined Rayleigh--Ritz projection}, we have
\begin{displaymath}
  \norm{\hat{r}}
  =    \norm{(A -   \hat{\lambda} I)   \hat{x}}
  \leq \norm{(A - \tilde{\lambda} I)   \hat{x}}
  \leq \norm{(A - \tilde{\lambda} I) \tilde{x}}.
\end{displaymath}
This indicates that the pair $(\hat{\lambda}, \hat{x})$ is generally more accurate than the pairs
$(\tilde{\lambda}, \hat{x})$ and $(\tilde{\lambda}, \tilde{x})$.
%Since the refined Ritz vector $\hat{x}$ is always a good approximation to the eigenvector $x$,
%the Rayleigh quotient $\hat{\lambda}$ is also a good approximation to the associated eigenvalue.
We call $(\hat{\lambda}, \hat{x})$ the refined Ritz pair.

For the diagonalizable matrix $A$,
let $\lambda_1, \ldots, \lambda_m$ be its distinct eigenvalues with multiplicities $n_1, \ldots, n_m$, respectively, and $P_i$ be the spectral projector corresponding to $\lambda_i$.
%Now we describe our removal approach.
Suppose that there is a cluster of $\hat{n}_i$ nearby refined Ritz values $\hat{\lambda}_{i1}, \hat{\lambda}_{i2}, \ldots, \hat{\lambda}_{i\hat{n}_i}$ in the region, which
approximate $\lambda_i$ in the region, and the corresponding refined Ritz vectors are
$\hat{x}_{i1}, \hat{x}_{i2}, \ldots, \hat{x}_{i\hat{n}_i}$.
Let $x_{ij} = P_i \hat{x}_{ij} / \norm{P_i \hat{x}_{ij}}$, $j=1, 2, \ldots, \hat{n}_i$,
be the normalized projection of $\hat{x}_{ij}$ onto the eigenspace associated with $\lambda_i$.
Then $x_{ij}$'s are unit-length eigenvectors of $A$ associated with $\lambda_i$.

First of all, we need to quantitatively determine a cluster of nearby
refined Ritz values approximating the same eigenvalue $\lambda$ in the region.
%A refined Ritz value $\hat{\lambda}$ is said to approximate an eigenvalue $\lambda$ if $\lambda$ is the
%closest eigenvalue to $\hat{\lambda}$.
Since $\lambda$ is unknown in advance, we cannot explicitly exploit it to carry out this task.
The following result provide us an effective approach for this purpose.

\begin{theorem} \label{thm5.1}
   Let $A = X \Lambda X^{-1}$ be the eigendecomposition of $A$. If  $\hat{\lambda}_i$ and $\hat{\lambda}_j$
   are in the region and
   \begin{equation} \label{eq: criterion for clustering refined Ritz values when A diagonalizable}
    \abs{\hat{\lambda}_i - \hat{\lambda}_j}
    \leq \kappa(X) \left( \norm{\hat{r}_i} + \norm{\hat{r}_j} \right),
    \end{equation}
  then $\hat{\lambda}_i$ and $\hat{\lambda}_j$ approximate the same eigenvalue $\lambda_*$ of $A$ in the region
  and belong to the same cluster
  for sufficiently small $\norm{\hat{r}_i}$ and $\norm{\hat{r}_j}$.
  For $A$ Hermitian, \eqref{eq: criterion for clustering refined Ritz values when A diagonalizable} becomes
  \begin{equation} \label{eq: criterion for clustering refined Ritz values when A Hermitian}
    \abs{\hat{\lambda}_i - \hat{\lambda}_j}
    \leq \norm{\hat{r}_i} + \norm{\hat{r}_j}.
  \end{equation}
\end{theorem}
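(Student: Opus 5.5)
The plan rests on the Bauer--Fike theorem applied to each refined Ritz pair, followed by a triangle inequality argument. For a unit vector $\hat{x}_i$ with residual $\hat{r}_i = A\hat{x}_i - \hat{\lambda}_i\hat{x}_i$, the perturbed matrix $A - \hat{r}_i\hat{x}_i^H$ has $(\hat{\lambda}_i,\hat{x}_i)$ as an exact eigenpair. Since $\norm{\hat{r}_i\hat{x}_i^H} = \norm{\hat{r}_i}$, Bauer--Fike then gives an eigenvalue $\lambda_{(i)}$ of $A$ with
\begin{displaymath}
  \abs{\hat{\lambda}_i - \lambda_{(i)}} \leq \kappa(X)\norm{\hat{r}_i}.
\end{displaymath}
Alternatively, one can write $\hat{r}_i = X(\Lambda - \hat{\lambda}_i I)X^{-1}\hat{x}_i$ and invert, assuming $\hat{\lambda}_i \notin \lambda(A)$; otherwise the bound is trivial. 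So each $\hat{\lambda}_i$ lies in the disc $D_i$ of radius $\kappa(X)\norm{\hat{r}_i}$ around some eigenvalue, and likewise for $\hat{\lambda}_j$. For $A$ Hermitian we have $\kappa(X) = 1$, because $X$ can be taken unitary, and this yields \eqref{eq: criterion for clustering refined Ritz values when A Hermitian} directly.

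Next, use the separation of the distinct eigenvalues. Let $\mathrm{gap} = \min_{k\neq l}\abs{\lambda_k - \lambda_l} > 0$, and suppose $\norm{\hat{r}_i}$ and $\norm{\hat{r}_j}$ are so small that
\begin{displaymath}
  2\kappa(X)\left(\norm{\hat{r}_i} + \norm{\hat{r}_j}\right) < \mathrm{gap}.
\end{displaymath}
This is the precise meaning of ``sufficiently small'' in the theorem. If $\lambda_{(i)} \neq \lambda_{(j)}$, the triangle inequality together with \eqref{eq: criterion for clustering refined Ritz values when A diagonalizable} gives
\begin{displaymath}
  \mathrm{gap} \leq \abs{\lambda_{(i)} - \lambda_{(j)}}
  \leq \abs{\lambda_{(i)} - \hat{\lambda}_i} + \abs{\hat{\lambda}_i - \hat{\lambda}_j} + \abs{\hat{\lambda}_j - \lambda_{(j)}}
  \leq 2\kappa(X)\left(\norm{\hat{r}_i} + \norm{\hat{r}_j}\right),
\end{displaymath}
which is a contradiction. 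Hence $\lambda_{(i)} = \lambda_{(j)} =: \lambda_*$. Under the same smallness condition, $\lambda_*$ is also the unique eigenvalue within $\kappa(X)\norm{\hat{r}_i}$ of $\hat{\lambda}_i$, and similarly for $\hat{\lambda}_j$. So ``the eigenvalue that $\hat{\lambda}_i$ approximates'' is well defined, and the two refined Ritz values belong to the same cluster.

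It remains to show that $\lambda_*$ lies in the region. Since $\hat{\lambda}_i$ is in the region and $\abs{\hat{\lambda}_i - \lambda_*} \leq \kappa(X)\norm{\hat{r}_i}$, this holds whenever the residual is small relative to the distance of $\hat{\lambda}_i$ from the boundary of the region. I expect this to be the delicate point. An eigenvalue sitting exactly at, or just outside, the endpoint $a$ or $b$ can be approximated by refined Ritz values inside $[a,b]$. I would handle this by stating that ``sufficiently small'' also includes the condition that $\kappa(X)\norm{\hat{r}_i}$ is less than the distance from the region to the nearest eigenvalue outside it. This condition is meaningful provided no eigenvalue lies on the boundary. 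The main obstacle is therefore making ``sufficiently small'' precise through the gap and boundary separation; the estimates themselves are routine Bauer--Fike arguments.
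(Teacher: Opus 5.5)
Your proposal is correct and follows the paper's proof. Both apply Bauer--Fike to the rank-one perturbed matrices $A-\hat r_i\hat x_i^H$ and $A-\hat r_j\hat x_j^H$, then use the triangle inequality to get $|\lambda_*-\lambda_\star|\le 2\kappa(X)(\|\hat r_i\|+\|\hat r_j\|)$, and let small residuals force $\lambda_*=\lambda_\star$. Your version adds two refinements. Using the minimal gap between distinct eigenvalues makes ``sufficiently small'' rigorous, whereas the paper treats $|\lambda_*-\lambda_\star|$ as a fixed constant even though these eigenvalues depend on the residuals. Your extra boundary condition also proves that $\lambda_*$ lies in the region, which the paper's proof never addresses; for the closed interval $[a,b]$ that condition is always attainable, so your caveat about eigenvalues on the boundary is unnecessary.
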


\begin{proof}
The Bauer--Fike theorem (cf. \cite[Theorem 7.2.2]{Golub_Matrix}) states that for any eigenvalue $\mu$ of a perturbed matrix $A + E$ we have
  \begin{displaymath}
    \min_{\lambda \in \lambda(A)} \abs{\mu - \lambda}
    \leq \kappa(X) \norm{E}.
  \end{displaymath}
  Since $\hat{\lambda}_i$ and $\hat{\lambda}_j$ are the eigenvalues of perturbed
  $A - \hat{r}_i \hat{x}_i^H$
  and $A - \hat{r}_j \hat{x}_j^H$ with $\|\hat{r}_i \hat{x}_i^H\|=\|\hat{r}_i\|$
  and $\|\hat{r}_j \hat{x}_j^H\|=\|\hat{r}_j\|$, respectively, by the above error bound
  and \eqref{eq: criterion for clustering refined Ritz values when A diagonalizable}, there exist two
  eigenvalues $\lambda_*$ and $\lambda_\star$ of $A$ such that
  \begin{eqnarray*}
    \abs{\hat{\lambda}_i - \lambda_*}
    &\leq& \kappa(X) \norm{\hat{r}_i}, \\
    \abs{\hat{\lambda}_j - \lambda_\star}
    &\leq &\kappa(X) \norm{\hat{r}_j}.
 \end{eqnarray*}
Therefore, by them and condition
condition \eqref{eq: criterion for clustering refined Ritz values when A diagonalizable}, we obtain
\begin{eqnarray*}
|\lambda_*-\lambda_\star|&=&|\lambda_*-\hat{\lambda}_i+\hat{\lambda}_i-
(\lambda_\star-\hat{\lambda}_j+\hat{\lambda}_j)|\\
&=&|\lambda_*-\hat{\lambda}_i-(\lambda_\star-\hat{\lambda}_j)+\hat{\lambda}_i-\hat{\lambda}_j|\\
&\leq& |\lambda_*-\hat{\lambda}_i|+|\lambda_\star-\hat{\lambda}_j|+\hat{\lambda}_i-\hat{\lambda}_j|\\
&\leq&2\kappa(X) \left( \norm{\hat{r}_i} + \norm{\hat{r}_j} \right).
\end{eqnarray*}
Let $|\lambda_*-\lambda_\star|=\delta$. Notice that $\delta$ is a fixed constant and is either zero or positive, and $\delta=0$ means $\lambda_*=\lambda_\star$.
Since sufficiently small $\|\hat{r}_i\|$ and $\|\hat{r}_j\|$ can make
the above right-hand side arbitrarily small, we must have $\lambda_*=\lambda_\star$.
\end{proof}

For a non-Hermitian matrix $A$,
suppose that the size of $\kappa(X)$ is modest, say no more than 100,
meaning that each eigenvalue $\lambda$ of $A$ is well conditioned. Then
based on \cref{thm5.1}, we determine a cluster of nearby refined Ritz values approximating the same
eigenvalue $\lambda_*$ by checking if $\hat{\lambda}_i$ and $\hat{\lambda}_j$ satisfy
\cref{eq: criterion for clustering refined Ritz values when A diagonalizable} for a general $A$, once
the relative residual norms $\norm{\hat{r}_i}/\|A\|$ and $\norm{\hat{r}_j}/\|A\|$
become reasonably small, say no more than $10^{-3}$. In computations, $\|A\|$ can be
replaced by cheaply computable $\sqrt{\|A\|_1\|A\|_{\infty}}$.
For $\kappa(X)$ large, $\lambda_*$ may be ill conditioned. Therefore, in order
to reliably determine if $\hat{\lambda}_i$ and $\hat{\lambda}_j$ are in
the same cluster, one needs
smaller $\norm{\hat{r}_i}/\|A\|$ and $\norm{\hat{r}_j}/\|A\|$.

Next we consider how to identify spurious approximate eigenvalues and remove
extra refined Ritz vectors.
An analogous proof to that of \cite[Theorem 5]{jia_multiple} leads to
the following theorem.

\begin{theorem}\label{thm5.2}
  Let $X_i = (x_{i1}, \ldots, x_{i\hat{n}_i})$,
  $\hat{X}_i = (\hat{x}_{i1}, \ldots, \hat{x}_{i\hat{n}_i})$,
  and $\sigma_{\min}(X_i)$ and $\sigma_{\min}(\hat{X}_i)$ denote the smallest singular values of $X_i$
  and $\hat{X}_i$, respectively. Then
  \begin{eqnarray}
    \sigma_{\min}(\hat{X}_i)
    &\leq& \sigma_{\min}(X_i) + \sqrt{\hat{n}_i} \max_{1 \leq j \leq \hat{n}_i} \norm{x_{ij} - \hat{x}_{ij}},\label{eqmin}\\
   \sigma_{\min}(\hat{X}_i)&\geq& \sigma_{\min}(X_i)-\sqrt{\hat{n}_i} \max_{1 \leq j \leq \hat{n}_i} \norm{x_{ij} - \hat{x}_{ij}}.\label{eqmin2}
\end{eqnarray}
For $\hat{n}_i>n_i$, the $\hat{n}_i-n_i$ smallest singular values
$\sigma_j(\hat{X}_i), j=n_i+1,\ldots,\hat{n}_i$ satisfy
\begin{equation} \label{smallsv}
    \begin{aligned}
      \sigma_j(\hat{X}_i)
      \leq & \sqrt{\hat{n}_i}\max_{1 \leq j \leq \hat{n}_i} \norm{x_{ij} - \hat{x}_{ij}}
      = \sqrt{\hat{n}_i} \max_{1 \leq j \leq \hat{n}_i} 2 \sin \frac{\angle(x_{ij}, \hat{x}_{ij})}{2} \\
      & \approx \sqrt{\hat{n}_i} \max_{1 \leq j \leq \hat{n}_i} \sin\angle(x_{ij}, \hat{x}_{ij})
      \qquad \text{for small } \angle(x_{ij}, \hat{x}_{ij}),
    \end{aligned}
  \end{equation}
where $\angle(\cdot, \cdot)$ denotes the acute angle of two nonzero vectors.
\end{theorem}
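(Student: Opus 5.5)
The plan is to treat $\hat{X}_i$ as a perturbation of $X_i$ and apply Weyl's perturbation inequality for singular values. Set $E_i = \hat{X}_i - X_i = (\hat{x}_{i1}-x_{i1},\ldots,\hat{x}_{i\hat{n}_i}-x_{i\hat{n}_i})$. For every $j$, the singular values of two matrices of the same size satisfy $\abs{\sigma_j(\hat{X}_i)-\sigma_j(X_i)} \leq \norm{E_i}$; this follows from the min--max characterization of singular values, cf. \cite{Golub_Matrix}. Taking $j=\hat{n}_i$ (the smallest singular value; both matrices are $n\times\hat{n}_i$ with $\hat{n}_i\le n$) gives both \eqref{eqmin} and \eqref{eqmin2} simultaneously, once $\norm{E_i}$ is bounded.

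To bound $\norm{E_i}$, I will pass to the Frobenius norm:
\[
\norm{E_i} \leq \norm[F]{E_i} = \Bigl(\sum_{j=1}^{\hat{n}_i}\norm{x_{ij}-\hat{x}_{ij}}^2\Bigr)^{1/2} \leq \sqrt{\hat{n}_i}\max_{1\le j\le \hat{n}_i}\norm{x_{ij}-\hat{x}_{ij}}.
\]
This is exactly the factor $\sqrt{\hat{n}_i}$ appearing in the statement.

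For \eqref{smallsv}, the key observation is that every column $x_{ij}=P_i\hat{x}_{ij}/\norm{P_i\hat{x}_{ij}}$ lies in the range of $P_i$, which is the $n_i$-dimensional eigenspace of $\lambda_i$ because $A$ is diagonalizable. Hence $\operatorname{rank}(X_i)\le n_i$, and so $\sigma_j(X_i)=0$ for $j=n_i+1,\ldots,\hat{n}_i$ whenever $\hat{n}_i>n_i$. Weyl's inequality for these indices then yields $\sigma_j(\hat{X}_i)\le \norm{E_i}\le\sqrt{\hat{n}_i}\max_j\norm{x_{ij}-\hat{x}_{ij}}$.

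Finally, for unit vectors the difference has norm $\norm{x_{ij}-\hat{x}_{ij}} = 2\sin(\theta/2)$, where $\theta$ is the angle between them, since $\norm{x-\hat{x}}^2 = 2-2\,\mathrm{Re}(x^H\hat{x})$. For small $\theta$ we have $2\sin(\theta/2)\approx\theta\approx\sin\theta$, which gives the approximate form in \eqref{smallsv}.

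The main obstacle is a subtlety in this last step. The identity $\norm{x-\hat x}=2\sin(\theta/2)$ requires $x^H\hat{x}$ to be real and nonnegative, so that $\theta$ is the acute angle defined through $\abs{x^H\hat{x}}$. I expect $\hat{x}_{ij}^H P_i\hat{x}_{ij}$ to be real and nonnegative only when $P_i$ is an orthogonal projector, i.e.\ in the Hermitian case. In general I would either rescale $x_{ij}$ by a unimodular factor, which leaves the span and hence $\operatorname{rank}(X_i)$ unchanged, or read the displayed equality in \eqref{smallsv} as holding under this phase normalization. Everything else is routine Weyl perturbation.
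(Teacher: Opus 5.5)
Your proposal is correct and follows essentially the same route as the paper, which defers to the proof of \cite[Theorem 5]{jia_multiple}. That argument is Weyl's singular value perturbation bound $\abs{\sigma_j(\hat{X}_i)-\sigma_j(X_i)}\le\norm{\hat{X}_i-X_i}\le\sqrt{\hat{n}_i}\max_j\norm{x_{ij}-\hat{x}_{ij}}$, combined with $\mathrm{rank}(X_i)\le n_i$. Your caveat is also well taken: the equality $\norm{x_{ij}-\hat{x}_{ij}}=2\sin(\angle(x_{ij},\hat{x}_{ij})/2)$ holds exactly only when $x_{ij}^H\hat{x}_{ij}\ge 0$, e.g.\ when $P_i$ is an orthogonal projector as in the Hermitian case, and your unimodular rescaling is a valid fix because $X_iD$ with $D$ diagonal unitary has the same singular values and rank as $X_i$.
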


\begin{remark}\label{rema}
Bounds~\eqref{eqmin} and \eqref{eqmin2} show that $\sigma_{\min}(\hat{X}_i)>0$, i.e., the columns of $\hat{X}_i$ must be
linearly independent, provided that $\sigma_{\min}(X_i)>0$ and the $\hat{x}_{ij}$ are sufficiently
accurate approximations to the $x_{ij}$. Bound~\eqref{smallsv} indicates that
$\hat{X}_i$ must be numerically rank deficient and its numerical rank is $n_i$ when taking
the truncation threshold of TSVD as the right-hand side of the bound.
\end{remark}

Based on \cref{thm5.2},
we are able to determine whether or not $\hat{x}_{i1},
\ldots, \hat{x}_{i\hat{n}_i}$ are nearly linearly independent
and to remove $\hat{n}_i-n_i$ extra refined Ritz vectors whenever
$\hat{n}_i>n_i$.
% In practice,
% as stated in \cite[Section 3]{jia_multiple},
% we regard $\hat{X}_i$ as numerically rank deficient if \cref{eq: upper bound for the smallest singular value of refined Ritz vectors} satisfies
% and thus detect the multiplicity $n_i$ by checking the singular values of $\hat{X}_i$
% and retain only $n_i$ linearly independent refined Ritz vectors.

%For $\sin\angle(x_{ij}, \hat{x}_{ij})$ in \cref{eq: upper bound for the smallest singular value of
%refined Ritz vectors},
%it can be estimated by the residual norm $\norm{\hat{r}_{ij}}$ of the refined Ritz pair
%$(\hat{\lambda}_{ij}, \hat{x}_{ij})$ defined in \eqref{eq: refined Ritz value and residual}.
The first author has established a compact
error bound for an approximate eigenvector in terms of the residual norm of approximate eigenpair
(cf. \cite[Theorem 3]{jia_multiple}),
which applies to our context directly, as shown below, where the only change is that we have
used the identity $\|P_i\|=\|I-P_i\|$ (cf. \cite{szyld2006}) to replace $\|I-P_i\|$ by $\|P_i\|$.

\begin{theorem}\label{eqres}
 Let $(\hat{\lambda}_{ij}, \hat{x}_{ij})$ be an approximate eigenpair of $A$ with $\hat{\lambda}_{ij}$
 approximating $\lambda_i$, and define the gap
  \begin{equation} \label{eq: def: actual gap between eigenvalues and refined Ritz value}
    g_{ij} = \min_{s \neq i} \abs{\lambda_s - \hat{\lambda}_{ij}},
  \end{equation}
  and the matrix
  \begin{displaymath}
    X_{ij} = (P_1 \hat{x}_{ij}, \ldots, P_{i-1} \hat{x}_{ij}, P_{i+1} \hat{x}_{ij}, \ldots, P_m \hat{x}_{ij}).
  \end{displaymath}
  Then
  \begin{equation} \label{eq: upper bound for sin(x, x_hat)}
    \sin \angle(x_{ij}, \hat{x}_{ij})
   \leq \inf_{D \text{ diag.}} \kappa(X_{ij} D) \|P_i\| \cdot \frac{\norm{\hat{r}_{ij}}}{g_{ij}}.
  \end{equation}
  If $A$ is Hermitian, then
  \begin{equation} \label{eq: upper bound for sin(x, x_hat) when A Hermitian}
    \sin \angle(x_{ij}, \hat{x}_{ij})
  \leq \frac{\norm{\hat{r}_{ij}}}{g_{ij}}.
  \end{equation}
\end{theorem}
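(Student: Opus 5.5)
We prove \cref{eqres} by decomposing $\hat{x}_{ij}$ along the spectral projectors and relating the residual to the components outside the eigenspace of $\lambda_i$. Write $\hat{x}_{ij}=\sum_{s=1}^m P_s\hat{x}_{ij}$, where $P_s\hat{x}_{ij}$ lies in the eigenspace of $\lambda_s$. Then
\begin{displaymath}
\hat{r}_{ij}=(A-\hat{\lambda}_{ij}I)\hat{x}_{ij}=\sum_{s\neq i}(\lambda_s-\hat{\lambda}_{ij})P_s\hat{x}_{ij}+(\lambda_i-\hat{\lambda}_{ij})P_i\hat{x}_{ij}.
\end{displaymath}
Apply $I-P_i$ to remove the last term:
\begin{displaymath}
(I-P_i)\hat{r}_{ij}=X_{ij}\,\mathrm{diag}(\lambda_s-\hat{\lambda}_{ij})_{s\neq i}\, e,
\end{displaymath}
where $e$ is the all-ones vector of length $m-1$. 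Since $\|I-P_i\|=\|P_i\|$, the left-hand side has norm at most $\|P_i\|\,\|\hat{r}_{ij}\|$.

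Next we bound $\|(I-P_i)\hat{x}_{ij}\|=\|X_{ij}e\|$ from above by the same quantity divided by $g_{ij}$. Fix any nonsingular diagonal $D$ and write $X_{ij}e=(X_{ij}D)(D^{-1}e)$. Then
\begin{displaymath}
\|X_{ij}e\|\le \|X_{ij}D\|\,\|D^{-1}e\|.
\end{displaymath}
For the other factor,
\begin{displaymath}
\|D^{-1}e\|\le g_{ij}^{-1}\,\bigl\|\mathrm{diag}(\lambda_s-\hat{\lambda}_{ij})D^{-1}e\bigr\|\le g_{ij}^{-1}\,\|(X_{ij}D)^{+}\|\,\|(I-P_i)\hat{r}_{ij}\|,
\end{displaymath}
where the second step uses that diagonal matrices commute and that $X_{ij}D$ has full column rank. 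Full column rank holds because the nonzero $P_s\hat{x}_{ij}$ lie in distinct eigenspaces and are therefore linearly independent. Any zero columns $P_s\hat{x}_{ij}=0$ contribute nothing and are simply dropped. Combining these bounds and taking the infimum over $D$ gives
\begin{displaymath}
\|(I-P_i)\hat{x}_{ij}\|\le \inf_D\kappa(X_{ij}D)\,\|P_i\|\,\frac{\|\hat{r}_{ij}\|}{g_{ij}}.
\end{displaymath}

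The last step converts this into a bound on the angle. Since $x_{ij}$ is the normalized $P_i\hat{x}_{ij}$, we have
\begin{displaymath}
\sin\angle(x_{ij},\hat{x}_{ij})=\min_{\alpha}\|\hat{x}_{ij}-\alpha P_i\hat{x}_{ij}\|\le\|\hat{x}_{ij}-P_i\hat{x}_{ij}\|=\|(I-P_i)\hat{x}_{ij}\|,
\end{displaymath}
which yields \eqref{eq: upper bound for sin(x, x_hat)}. In the Hermitian case the $P_s$ are orthogonal projectors, so the columns of $X_{ij}$ are mutually orthogonal. Choosing $D$ to normalize these columns makes $\kappa(X_{ij}D)=1$, and $\|P_i\|=1$, which gives \eqref{eq: upper bound for sin(x, x_hat) when A Hermitian}. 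One can also obtain this directly from $\|\hat{r}_{ij}\|^2\ge\sum_{s\neq i}|\lambda_s-\hat{\lambda}_{ij}|^2\|P_s\hat{x}_{ij}\|^2$.

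The main obstacle is the middle step: making the $D$-scaling argument give exactly $\kappa(X_{ij}D)$ rather than a product of two unrelated norms, and handling the possibly vanishing columns of $X_{ij}$. Both points are handled above.
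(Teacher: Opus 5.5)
Your proof is correct and is essentially the standard argument behind \cite[Theorem 3]{jia_multiple}, which the paper cites instead of proving: you expand $\hat{x}_{ij}$ and $\hat{r}_{ij}$ along the spectral projectors, apply $I-P_i$, use the diagonal scaling $D$ and the gap $g_{ij}$ to produce $\kappa(X_{ij}D)/g_{ij}$, and replace $\|I-P_i\|$ by $\|P_i\|$ exactly as the paper does. The assumptions you use tacitly ($\|\hat{x}_{ij}\|=1$, $g_{ij}>0$, $P_i\hat{x}_{ij}\neq 0$) all hold here, and dropping zero columns of $X_{ij}$ only strengthens the bound.
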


\begin{remark}
The quantities $\inf_{D \text{ diag.}} \kappa(X_{ij} D)$ and $\|P_i\|$ measure
the degree of linear independence of
the eigenvectors $x_{1j},\ldots,x_{i-1j},x_{i+1j},\ldots,x_{mj}$ of $A$
and the conditioning of $\lambda_i$, respectively.
It is known from \cite[Theorem 7.1 and (7.3)]{jiastewart2001} that
$\|\hat{r}_{ij}\|/\|A\|, \ j=1, \ldots, \hat{n}_i$ are as small as the derivations of $x_{ij}$ from the underlying
subspace $\mathcal{R}(S)$. Therefore, provided that $g_{ij}$ defined by \eqref{eq: def: actual gap between eigenvalues and refined Ritz value} is not very small, i.e., $\lambda_i$ is not very close to
the other distinct eigenvalues of $A$,
bounds~\eqref{eqmin}--\eqref{eqmin2} and \cref{eqres} show that $\sigma_{\min}(\hat{X}_i)$ is not small for $\hat{n}_i\leq n_i$ when the $\|\hat{r}_{ij}\|$ are sufficiently small,
indicating that the refined Ritz values $\hat{\lambda}_{i1}, \hat{\lambda}_{i2}, \ldots, \hat{\lambda}_{i\hat{n}_i}$ are genuine
and the refined Ritz vectors $\hat{x}_{i1}, \hat{x}_{i2}, \ldots, \hat{x}_{i\hat{n}_i}$ are thus retained.

On the other hand, a combination of \eqref{smallsv} and
\eqref{eq: upper bound for sin(x, x_hat)}--\eqref{eq: upper bound for sin(x, x_hat) when A Hermitian} indicate that $\hat{X}_i$ exactly has
$\hat{n}_i-n_i$ small singular values whose sizes are decided by the residual norms and gaps.
We are thus sure that the numerical rank
of $\hat{X}_i$ is $n_i$, and retain $n_i$ refined Ritz values and
remove extra $\hat{n}_i-n_i$ refined Ritz vectors.
\end{remark}

\begin{remark}
The gap $g_{ij}$ in \eqref{eq: def: actual gap between eigenvalues and refined Ritz value}
can be estimated by the distances between the approximating
$\hat{\lambda}_{ij}$ and other refined Ritz values: Suppose that for some $s \in \{ 1, 2, \ldots, m \}$,
there is a cluster of $\hat{n}_s$ nearby refined Ritz values $\hat{\lambda}_{s1}, \hat{\lambda}_{s2}, \ldots, \hat{\lambda}_{s\hat{n}_s}$ that approximate $\lambda_s$.
Then we replace $g_{ij}$ by its computable approximation
\begin{equation}\label{tildeg_ij}
\tilde{g}_{ij} = \min_{\substack{s \neq i \\ 1 \leq t \leq \hat{n}_s}} \abs{\hat{\lambda}_{st} - \hat{\lambda}_{ij}}.
\end{equation}
\end{remark}
It follows from \eqref{smallsv} and \eqref{eq: upper bound for sin(x, x_hat)}--\eqref{eq: upper bound for sin(x, x_hat) when A Hermitian} that bound~\eqref{smallsv} is approximately
\begin{equation} \label{eq: criterion for removing refined Ritz vectors}
\sqrt{\hat{n}_i} \max_{1 \leq j \leq \hat{n}_i} C_{ij} \frac{\norm{\hat{r}_{ij}}}{\tilde{g}_{ij}},
\end{equation}
where $C_{ij} = \inf_{D \text{ diag.}} \kappa(X_{ij} D) \|P_i\|$ for a
general diagonalizable $A$ and $C_{ij} = 1$ for a Hermitian $A$. In computations,
the $C_{ij}$ are unknown for a general $A$, and we set them to be no more than some modest constant,
say $100$, which amount to supposing that the eigenvectors of $A$ are not ill conditioned.
% Replace $g_{ij}$ in \eqref{eq: def: actual gap between eigenvalues and refined Ritz value} by $\tilde{g}_{ij}$ in \eqref{tildeg_ij}.
We compute the SVD of $\hat{X}_i$ and check how many of its singular values meet \cref{eq: criterion for removing refined Ritz vectors}: If none satisfies the bound,
the $\hat{n}_i$ refined Ritz values $\hat{\lambda}_{ij}$
are genuine, and we retain the $\hat{n}_i$ refined Ritz vectors; if
$\hat{n}_i-n_i$ ones are no more than the small quantity in
\cref{eq: criterion for removing refined Ritz vectors},
there are $\hat{n}_i-n_i$
spurious refined Ritz values $\hat{\lambda}_{ij}$, and we remove any
$\hat{n}_i-n_i$ extra refined Ritz vectors from $\hat{X}_i$. It is worth pointing out that
the size of $\max_{1 \leq j \leq \hat{n}_i} C_{ij}$ is nonessential and we only require
that the $\|\hat{r}_{ij}\|$ are such that the quantity in \cref{eq: criterion for removing refined Ritz vectors} is small: the larger $\max_{1 \leq j \leq \hat{n}_i} C_{ij}$ is, the smaller the $\|\hat{r}_{ij}\|$ are required.

\begin{remark}
Computationally, the aforementioned removal approach can be carried out without
computing the singular values of the overly tall $\hat{X}_i$.
Relation~\eqref{eq: refined Rayleigh--Ritz projection in matrix form} shows
that $\hat{x}_{ij} = U \hat{z}_{ij}$. We call the $\hat{z}_{ij}$ primitive refined Ritz vectors.
Since $U$ is column orthonormal, the singular values of  $\hat{X}_i$ and  $\hat{Z}_i=(\hat{z}_{i1}, \ldots, \hat{z}_{i\hat{n}_i})$ are identical.
Therefore, we only need to compute
the SVD of the small sized matrix $\hat{Z}_i$ and determine its numerical rank $n_i$.
\end{remark}
% \cref{alg: removal approach} describes our
% removal approach of extra refined Ritz vectors.

% \begin{algorithm}[htbp]
% \caption{Removing extra refined Ritz vectors from a cluster}
% \label{alg: removal approach}
% \begin{algorithmic}[1]
%   \REQUIRE $s$ primitive refined Ritz vectors $\hat{z}_{1}, \ldots, \hat{z}_s$ aligned to the eigenspace of $\lambda$.
%   \STATE Set $\hat{Z} = [\hat{z}_1]$.
%   \FOR {$j = 2, 3, \ldots, s$}
%     \STATE Set $\hat{Z} = [\hat{Z}, \hat{z}_j]$.
%     \IF{$\sigma_{\min}(\hat{Z})$ satisfies \cref{eq: criterion for removing refined Ritz vectors}}
%       \STATE Discard $\hat{z}_j$.
%     \ELSE
%       \STATE Go to step 3.
%     \ENDIF
%   \ENDFOR
%   \RETURN The retained refined Ritz vectors in $\hat{X}$.
% \end{algorithmic}
% \end{algorithm}

% \textcolor{red}{As an alternative of \cref{alg: removal approach}, it is simpler to
% compute the SVD of $\hat{Z}_i=(\hat{z}_1, \ldots, \hat{z}_s)$ once and
% determine its numerical rank and spurious
% refined Ritz values and refined Ritz vectors. This alternative is less costly
% than \cref{alg: removal approach}, which computes the SVDs of
% a sequence of augmenting matrices,
% unless $\hat{n}_i\gg n_i$.}

In summary, we describe our removal approach as \cref{alg: removal}, called the \emph{refined removal approach}. The refined removal approach is tune free and does not need any
auxiliary user-specified tolerance, unlike that in a SS-RR algorithm, and it is
automatically performed during a SS--RRR algorithm, especially for $A$ Hermitian.
Thus the approach is easy to use in practice.

\begin{algorithm}[htbp]
\caption{Refined removal approach}
\label{alg: removal}
\begin{algorithmic}[1]
    \STATE
    Cluster the refined Ritz values based on
    \cref{eq: criterion for clustering refined Ritz values when A diagonalizable}
    or \cref{eq: criterion for clustering refined Ritz values when A Hermitian}.

 \STATE For each cluster, compute the SVD of $\hat{Z}_i$, check how many of its singular values
    are no more than the quantity \cref{eq: criterion for removing refined Ritz vectors};
    remove the extra refined Ritz vectors if any, and retain the refined Ritz
    values and vectors whose numbers are the numerical rank of $\hat{Z}_i$.
\end{algorithmic}
\end{algorithm}

\subsection{A complete algorithm}

Now we focus on the case that $A$ is Hermitian.
Combining the refined Rayleigh--Ritz projection \cref{eq: def: refined Rayleigh--Ritz projection}
with the refined removal approach, %or the global refined removal approach,
we describe a restarted CJ--SS--RRR algorithm as \cref{alg: CJSSRRR}.
The differences between \cref{alg: CJSSRR} and \cref{alg: CJSSRRR} are:
(i) the Rayleigh--Ritz projection in \cref{alg: CJSSRR} is replaced by the refined Rayleigh--Ritz projection in steps \ref{step: CJSSRR: RR}--\ref{step: CJSSRRR: refined} of \cref{alg: CJSSRRR};
(ii) in step 8 of \cref{alg: CJSSRRR},
the spurious refined Ritz values and
extra refined Ritz vectors are removed by the refined removal approach.
For the convergence test in step \ref{step: CJSSRRR: convergence test}, a refined Ritz pair is
claimed to have
converged if its relative residual norm drops below a given tolerance $tol$:
\begin{displaymath}
  \frac{\norm{\hat{r}_i^{(k)}}}{\norm{A}} < tol,
\end{displaymath}
where $\norm{A} = \max \{ \abs{\lambda_{\min}}, \abs{\lambda_{\max}} \}$.
%Since the CJ--SS--RR method is based on Rayleigh--Ritz projection and \cref{alg: CJSSRRR} is based on refined %Rayleigh--Ritz projection, \cref{alg: CJSSRRR} can also be referred as the CJ--SS--RRR algorithm.

\begin{algorithm}[htbp]
\caption{The restarted CJ--SS--RRR algorithm}
\label{alg: CJSSRRR}
\begin{algorithmic}[1]
  \REQUIRE The Hermitian matrix $A$, the real interval $[a, b]$, the series degree $d$, the moment number $M$, an $n$-by-$\ell$ starting matrix $V=V^{(0)}$ with $M\ell \geq n_{ev}$, and the stopping tolerance $tol$.
  \STATE Compute the CJ coefficients by \cref{eq: def: Chebyshev coefficient,eq: def: Jackson damping factor}.
  \FOR {$k = 1, 2, \ldots$}
    \STATE Compute $S^{(k)}=(S_0^{(k)},S_1^{(k)},\ldots,S_k^{(k)})$ by \cref{eq: def: polynomial approximation of S}. \label{step: CJSSRRR: compute S}
    \STATE Compute the QR factorization: $S^{(k)} = U^{(k)}R^{(k)}$.
    \STATE Apply the Rayleigh--Ritz projection \cref{eq: def: Rayleigh--Ritz procedure} to  $\mathcal{R}(U^{(k)})$, and
     compute the Ritz values $\tilde{\lambda}_i^{(k)}, i = 1, 2, \ldots, M\ell$. \label{step: CJSSRRR: RR}
    \STATE Select the potential Ritz values $\tilde{\lambda}_1^{(k)}, \ldots, \tilde{\lambda}_p^{(k)}$ in $[a, b]$ and
    compute their corresponding refined Ritz vectors $\hat{x}_1^{(k)}, \ldots, \hat{x}_p^{(k)}$ by \cref{eq: refined Rayleigh--Ritz projection with QR}.\label{step: CJSSRRR: refined}
    \STATE Compute the refined Ritz values $\hat{\lambda}_1^{(k)}, \ldots, \hat{\lambda}_p^{(k)}$ and residuals $\hat{r}_1^{(k)}, \ldots, \hat{r}_p^{(k)}$ by \cref{eq: refined Ritz value and residual}.
    \STATE For $\hat{\lambda}_1^{(k)}, \ldots, \hat{\lambda}_p^{(k)}$
    and  $\hat{x}_1^{(k)}, \ldots, \hat{x}_p^{(k)}$, perform \cref{alg: removal}, and
    only retain the genuine refined Ritz values and vectors.
    \STATE Test the convergence of the retained refined Ritz pairs. \label{step: CJSSRRR: convergence test}
    \STATE If not converged, set the updated starting
    $V^{(k+1)}$ to be the first $\ell$ columns of $S^{(k)}$ and restart.
  \ENDFOR
  \RETURN The converged $n_{ev}$ refined Ritz pairs $(\hat{\lambda}_i^{(k)}, \hat{x}_i^{(k)})$.
\end{algorithmic}
\end{algorithm}

%For a general diagonalizable matrix $A$,
%we can restart a SS--RRR algorithm similarly.
%based on
%the refined Rayleigh--Ritz projection and \cref{alg: removal approach} or \cref{alg: removal approach}b
%by replacing step \ref{step: CJSSRRR: compute S} in \cref{alg: CJSSRRR} with the step for computing
%$S^{(k)}$ in the SS--RR method \cite{block_SS_RR,SS_RR},
%and replacing the criterion
%\cref{eq: criterion for clustering refined Ritz values when A Hermitian}
%in step \ref{step: CJSSRRR: clustering} with
%\cref{eq: criterion for clustering refined Ritz values when A diagonalizable}.
%However, in this case,
%the constants $\kappa(X)$ in
%\cref{eq: criterion for clustering refined Ritz values when A diagonalizable}
%and $C_{ij}$ in
%\cref{eq: criterion for removing refined Ritz vectors}
%are unknown and a priori.
%Jia \cite{jia_multiple} suggests setting them to some modest constant not exceeding 1000, which amounts to
%assuming that the eigenvalues $\lambda$ are not too ill conditioned.

\section{Numerical experiments}\label{sec:6}

In this section, we present numerical experiments to illustrate the performance of \cref{alg: CJSSRRR} and
its superiority to \cref{alg: CJSSRR}. We compare the refined removal approach with the
TSVD approach and
the residual norm-based removal approach used in \cref{alg: CJSSRR}, and show that the former one is much
more reliable and robust than the latter two ones.
The test matrices are from the SuiteSparse Matrix Collection \cite{matrix_collection}, and we list some of their basic properties and the intervals of interest in \cref{tab: test matrices}.
For each matrix, interval I is close to one end of the spectrum, while
interval II is inside the spectrum.
The minimum and maximum eigenvalues $\lambda_{\min}, \lambda_{\max}$ of each matrix are computed by {\sc
Matlab}'s \texttt{eigs} function and the number $n_{ev}$ of desired eigenvalues  in each interval is computed by the CJ-FEAST eigensolver, which is a direct adaptation of the FEAST SVDsolvers in \cite{CJ_FEAST_cross,CJ_FEAST_augmented}.
All the numerical experiments were performed on an Intel Core i7-9700, CPU 3.0GHz, 8GB RAM
using {\sc Matlab} R2024b with the machine precision $\epsilon_{\rm mach} = 2.22\times 10^{-16}$ under the Microsoft Windows 10 64-bit system. The unit-length desired eigenvectors $x_1, \ldots, x_{n_{ev}}$ are computed by the CJ-FEAST eigensolver too, and their relative residual norms are smaller than $10^{-14}$, the level of $\epsilon_{\rm mach}$. We thus
regard the computed solutions by the CJ-FEAST eigensolver as ``exact'' ones, and use them as a reference
standard.

\begin{table}[htbp]
{\small\caption{Properties of the test matrices and the intervals of interest.}
\label{tab: test matrices}
\centering
\begin{tabular}{*{8}{c}}
\toprule
matrix            & size  & $\lambda_{\min}$ & $\lambda_{\max}$ & Interval I  & $n_{ev}$ & Interval II  & $n_{ev}$ \\
\midrule
SiH4              & 5041  & $-0.996$           & 36.8             & $[0.5,2.5]$   & 120      & $[17.5,18]$   & 123      \\
SiNa              & 5743  & $-0.705$           & 25.7             & $[0,2]$       & 178      & $[12.5,13]$   & 183      \\
delaunay\_n13     & 8192  &$-3.570$            & 6.46             & $[5.2,6.2]$   & 310      & $[1.5,2]$     & 312      \\
benzene           & 8219  & $-0.730$           & 58.4             & $[1.5,4.5]$   & 229      & $[28,29]$     & 231      \\
stokes64          & 12546 & $-1.70\times 10^{-3}$         & 4.00             & $[3.8,3.9]$   & 232      & $[1.9,2.1]$   & 232      \\
Pres\_Poisson     & 14822 &  $1.28\times 10^{-5}$         & 26.1             & $[8,13]$      & 68       & $[4.2,5]$     & 66       \\
Si10H16           & 17077 & $-1.150$            & 37.0             & $[0.5,2]$     & 307      & $[18,18.4]$   & 303      \\
% brainpc2          & 27607 & -2000            & 4465             & [1905,1915] & 146      & [-30,0]     & 148      \\
% rgg\_n\_2\_15\_s0 & 32768 & -5.12            & 17.4             & [11,13]     & 283      & [5.5,6]     & 286      \\
% SiO               & 33401 & -1.68            & 84.4             & [2,4]       & 332      & [41,41.5]   & 332      \\
% Andrews           & 60000 & -2.17e-14        & 36.5             & [1.5,2.5]   & 346      & [18.1,18.3] & 353      \\
\bottomrule
\end{tabular}}
\end{table}

%In what follows, since the CJ series expansion approximation
%requires the spectrum of $A$ to be in $[-1, 1]$, the matrix $A$ and
%interval ends $a, b$ are replaced by $l(A)$, $l(a)$, $l(b)$, respectively,
%where the linear transformation
%$l(t) = 2(t - \lambda_{\min})/(\lambda_{\max} - \lambda_{\min}) - 1$.
% To make a fair comparison, for each test problem and given $\ell$, we used the same starting $n \times \ell$ matrix $V^{(0)}$ for the algorithms tested, which was generated randomly in a normal distribution.

% \subsection{The performance of the}

\subsection{The performance of the refined removal approach}
\label{subsec: performance of refined removal}

In this subsection, we report the performance of the refined removal approach, i.e., \cref{alg: removal}.
%Before presenting the numerical results, we should point out that the performance of any removal approach is %dependent on the angle between the desired eigenvectors
%and the search subspace $\Range(S)$.
%That is, if one of the desired eigenvectors is orthogonal to $\Range(S)$, then any attempt to select %$n_{ev}$ desired approximate eigenvectors from $\Range(S)$ will fail, as there indeed at most $n_{ev}-1$ %desired eigenvectors can be approximated by $\Range(S)$.
%For the desired eigenvectors $x_1, \ldots, x_{n_{ev}}$ and the search subspace $\Range(S)$,
% We use the largest distance or deviation of the $n_{ev}$ desired eigenvectors from the subspace
We use the deviation of the desired eigenspace $\mathcal{X}_{n_{ev}} = \mathrm{span}\{x_1, \ldots, x_{n_{ev}}\}$ from the underlying subspace
$\Range(S)$:
\begin{equation}\label{deviation}
\epsilon_{ev} :=  \sin\angle(\Range(S), \mathcal{X}_{n_{ev}})
= \norm{(I - UU^H) X_{n_{ev}}}
\end{equation}
to measure the accuracy of $\Range(S)$,
%when extracting approximations from it to the desired eigenspace $\mathcal{X}_{n_{ev}}$,
where the columns of $U$ and $X_{n_{ev}}$ are orthonormal bases of
$\Range(S)$ and $\mathcal{X}_{n_{ev}}$, respectively.

We illustrate the performance of \cref{alg: CJSSRRR} with the refined removal approach for
different $\epsilon_{ev}$.
We took the series degree $d=\lceil \pi^2(b-a)^{-4/3} \rceil - 2$, the moment number $M$ as $8$, and
the dimension $M\ell$ of $\Range(S)$ as $8 \lceil 1.5 n_{ev} / 8 \rceil$ such that $M\ell \geq
1.5 n_{ev}$. For each test matrix and interval in \cref{tab: test matrices},
% we run ten iterations of \cref{alg: CJSSRRR} using five starting matrices $V^{(0)}$ randomly generated in a normal distribution.
we ran \cref{alg: CJSSRRR} five times using five different starting matrices $V^{(0)}$ generated
randomly in the standard normal distribution, and performed
ten restarts in each run.

Denote by $n_{in}$ the number of the refined Ritz values after the removal.
\cref{fig: performance of refined removal: nev} shows the numbers of the retained refined Ritz vectors using $\Range(S)$ for different $\epsilon_{ev}$.
For each test matrix and interval in \cref{tab: test matrices}, the results of
five runs are plotted in the same figure.
We can observe that
% when the accuracy of the search subspace $\Range(S)$ is higher than $10^{-3}$,
once $\epsilon_{ev} \leq 10^{-3}$,
the refined removal approach always retained $n_{ev}$ refined Ritz vectors exactly.
As a matter of fact, we can see that,
except for the test problems SiNa with interval I and delaunay\_n13 with interval II,
the refined removal approach correctly retained the $n_{ev}$ refined Ritz vectors
% when the accuracy of the search subspace $\Range(S)$ is higher than $10^{-2}$.
when $\epsilon_{ev} \leq 10^{-2}$.

\begin{figure}[htbp]
  \centering
  \newcommand{\txtW}{0.47\textwidth}

  \begin{subfigure}[t]{\textwidth}
    \centering
    \includegraphics[width=\txtW]{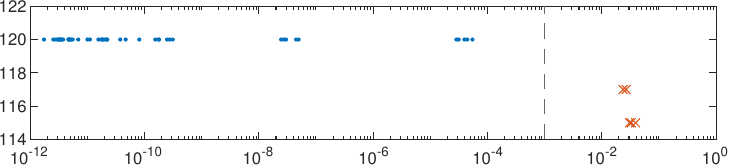}
    \includegraphics[width=\txtW]{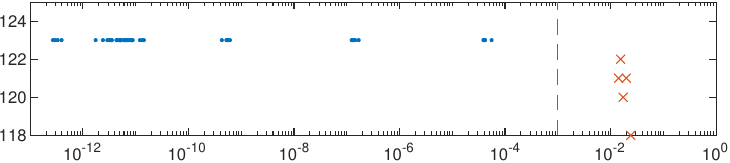}
    \subcaption{SiH4.}
  \end{subfigure}

  \begin{subfigure}[t]{\textwidth}
    \centering
    \includegraphics[width=\txtW]{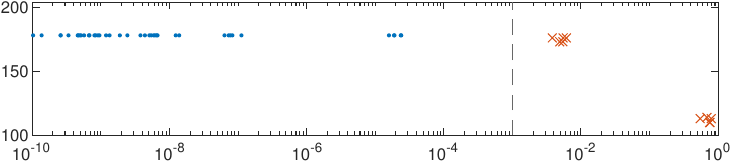}
    \includegraphics[width=\txtW]{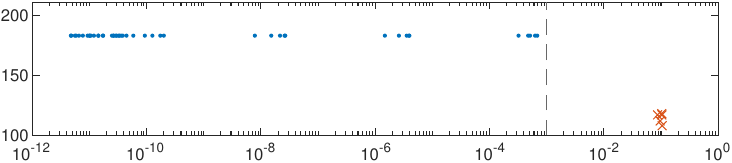}
    \subcaption{SiNa.}
  \end{subfigure}

  \begin{subfigure}[t]{\textwidth}
    \centering
    \includegraphics[width=\txtW]{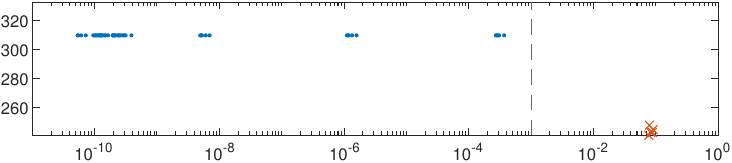}
    \includegraphics[width=\txtW]{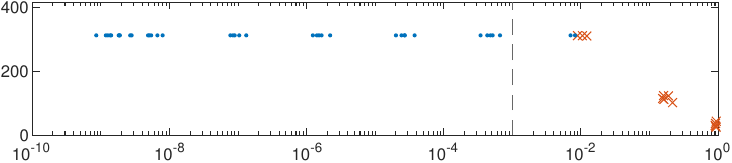}
    \subcaption{delaunay\_n13.}
  \end{subfigure}

  \begin{subfigure}[t]{\textwidth}
    \centering
    \includegraphics[width=\txtW]{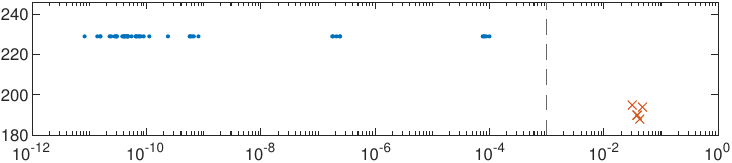}
    \includegraphics[width=\txtW]{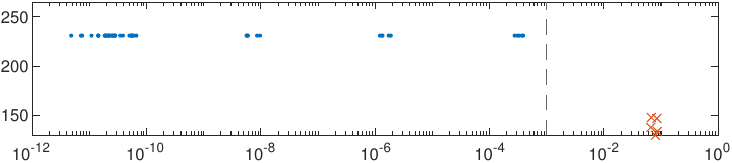}
    \subcaption{benzene.}
  \end{subfigure}

  \begin{subfigure}[t]{\textwidth}
    \centering
    \includegraphics[width=\txtW]{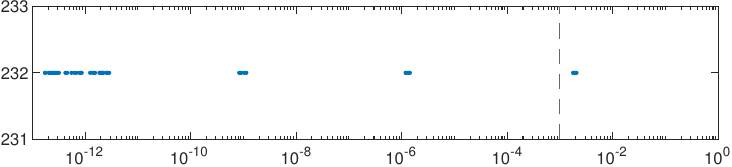}
    \includegraphics[width=\txtW]{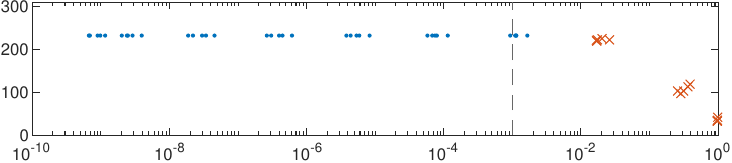}
    \subcaption{stokes64.}
  \end{subfigure}

  \begin{subfigure}[t]{\textwidth}
    \centering
    \includegraphics[width=\txtW]{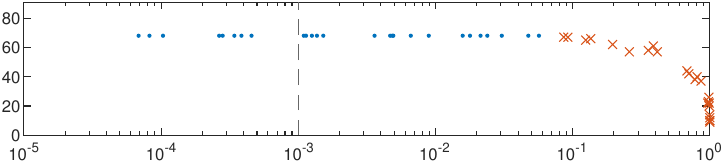}
    \includegraphics[width=\txtW]{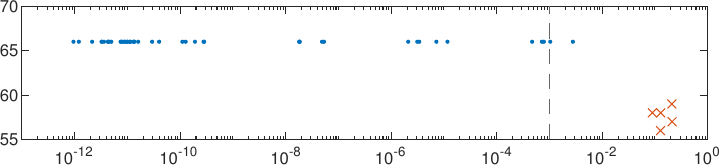}
    \subcaption{Pres\_Poisson.}
  \end{subfigure}

  \begin{subfigure}[t]{\textwidth}
    \centering
    \includegraphics[width=\txtW]{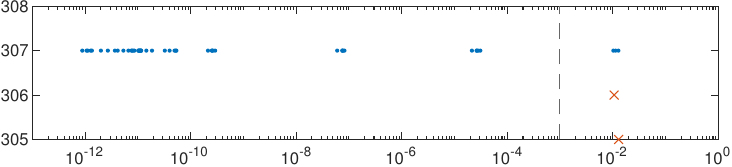}
    \includegraphics[width=\txtW]{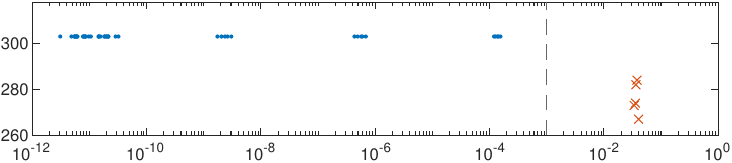}
    \subcaption{Si10H16.}
  \end{subfigure}

  \caption{
    The performance of \cref{alg: CJSSRRR}, where the left and right figures are for each test matrix
    with intervals I and II in \cref{tab: test matrices}, respectively.
    The horizontal axis is the deviation $\epsilon_{ev}$ in \cref{deviation}, and
    the vertical axis is the number ${n}_{in}$ of the retained refined Ritz values after the removal.
    The vertical dashed line is $\epsilon_{ev} = 10^{-3}$.
    For 5 runs, after 10 restarts were performed,
    total 50 points $(\epsilon_{ev}, {n}_{in})$'s for each test problem are plotted in one figure.
    A point $(\epsilon_{ev}, {n}_{in})$ is marked by a dot if ${n}_{in} = n_{ev}$ and by a cross if ${n}_{in} \neq n_{ev}$. Therefore, at each restart, five
    points look like a point.
  }
  \label{fig: performance of refined removal: nev}
\end{figure}

\begin{figure}[htbp]
  \centering
  \newcommand{\txtW}{0.47\textwidth}
  \begin{subfigure}[t]{\txtW}
    \includegraphics[width=\textwidth]{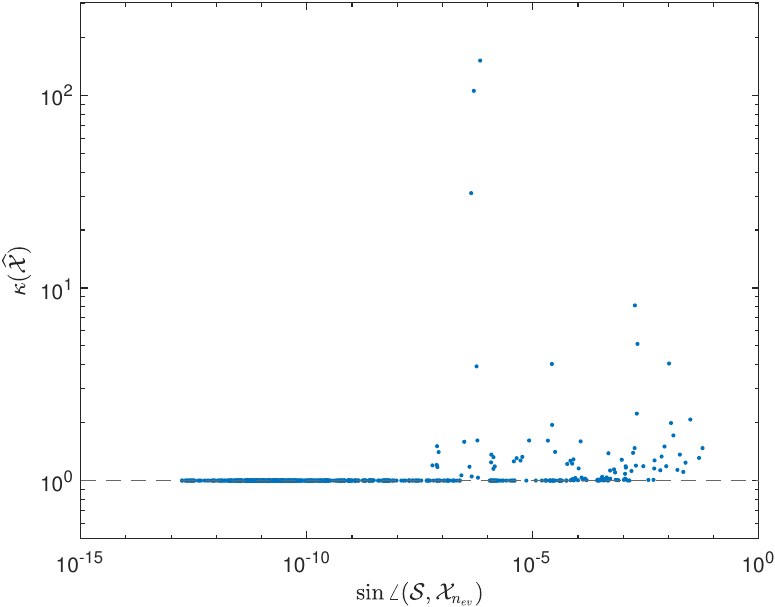}
    \subcaption{The condition number $\kappa(\widehat{X})$.}
    \label{subfig: performance of refined removal: cond}
  \end{subfigure}
  \begin{subfigure}[t]{\txtW}
    \includegraphics[width=\textwidth]{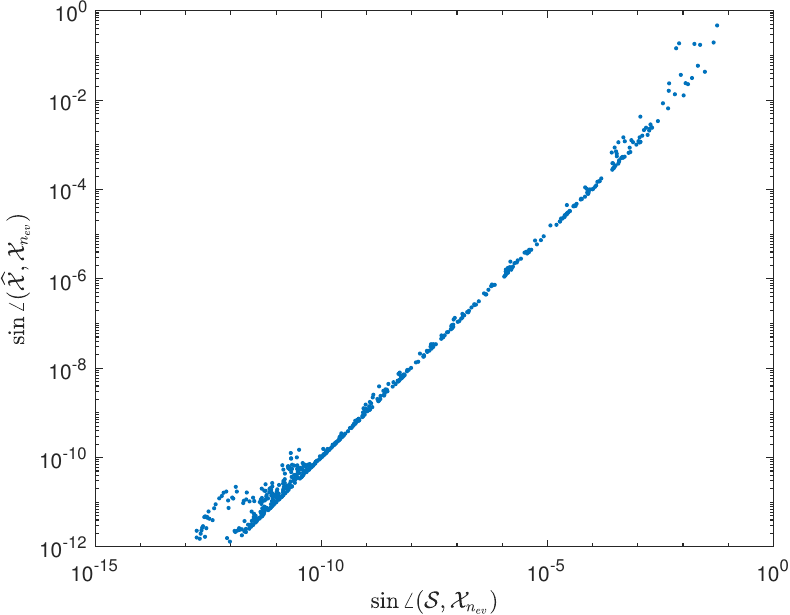}
    \subcaption{The deviation $ \sin\angle(\Range(\widehat{X}), \mathcal{X}_{n_{ev}})$.}
    \label{subfig: performance of refined removal: sin}
  \end{subfigure}

  \caption{
    The performance of \cref{alg: CJSSRRR}.
%    when the number of the retained refined Ritz vectors is equal to $n_{ev}$.
   (a): the condition number $\kappa(\widehat{X})$ of $\widehat{X}$ whose columns
   are the retained refined Ritz vectors.
    (b): the error $\sin\angle(\Range(\widehat{X}), \mathcal{X}_{n_{ev}})$ versus the deviation
    $\epsilon_{n_{ev}}$.
    In each figure,
    the results of all the test matrices and intervals in \cref{tab: test matrices} are plotted together.
  }
  \label{fig: performance of refined removal: cond and sin}
\end{figure}

To demonstrate that the $n_{ev}$ retained refined Ritz vectors $\hat{x}_1, \ldots, \hat{x}_{n_{ev}}$ are
good approximations to the desired eigenvectors, we draw the condition numbers $\kappa(\widehat{X})$ of
$\widehat{X} = (\hat{x}_1, \ldots, \hat{x}_{n_{ev}})$ and the errors
$\sin\angle(\Range(\widehat{X}), \mathcal{X}_{n_{ev}})$
in \cref{fig: performance of refined removal: cond and sin}.
The former measures the linear independence of the retained refined Ritz vectors, while the latter measures the accuracy of $\Range(\widehat{X})$.

In \cref{subfig: performance of refined removal: cond}, the condition numbers $\kappa(\widehat{X})$ are all smaller than $200$ and most of them are very close to one as $\epsilon_{ev}$ becomes small.
Thus the $n_{ev}$ refined Ritz vectors retained by our removal approach are not only indeed linearly
independent but also numerically orthonormal.
This justifies that \cref{alg: removal} is reliable and robust.

We can see from \cref{subfig: performance of refined removal: sin} that as the accuracy of
$\Range(S)$ improves, $\Range(\widehat{X})$ becomes equally
more accurate:
two errors pairwise are almost equal since they are almost $45^{\circ}$ angles.
This indicates that the retained $n_{ev}$ refined Ritz vectors
are almost best approximations to the desired eigenvectors since their errors as small as
$\epsilon_{ev}$ in \eqref{deviation}.
%, which confirm the convergence result on
%refined Ritz vectors in \cite[Theorem 7.1]{jiastewart2001}.

% We have also tested \cref{alg: CJSSRRR}b, i.e., \cref{alg: CJSSRRR} with the removal approach
% \cref{alg: removal approach}b used, and found that \cref{alg: CJSSRRR} and \cref{alg: CJSSRRR}b
% behaved completely the same, meaning that  \cref{alg: removal approach}
% and \cref{alg: removal approach} are equally
% effective and reliably when using them in \cref{alg: CJSSRRR}.
%Once $\epsilon_{ev} \leq 10^{-3}$,
%the removal approaches always selected the numbers $n_{ev}$
%of linearly independent refined Ritz vectors incorrectly,
%and these selected linearly dependent refined Ritz vectors are almost best approximations to the desired
%eigenvectors.

\subsection{The comparison of different removal approaches}

In this subsection, we compare the refined removal approach
with two removal approaches used in the SS--RR methods:
the TSVD approach and the residual norm-based removal approach,
where in the first approach we set the truncation tolerance to be $10^{-12}$ and $10^{-14}$, respectively,
and in the second approach we set the threshold $\delta$ to be $10^{-2}$ and $10^{-4}$, respectively.
A basic property is:
The smaller the tolerance is, the more Ritz values in the given interval may be retained; the smaller
$\delta$ is, the more Ritz values in the interval may be removed.

In the experiments, we ran \cref{alg: CJSSRR} using the above two removal
approaches. Throughout this subsection, we took the same series degree $d$,
moment number $M$, dimension $M\ell$ of $\Range(S)$,
and starting matrix $V^{(0)}$
as those in \cref{subsec: performance of refined removal}.

\begin{figure}[htbp]
  \centering
  \newcommand{\txtW}{0.47\textwidth}

  \begin{subfigure}[t]{\txtW}
    \includegraphics[width=.986\textwidth]{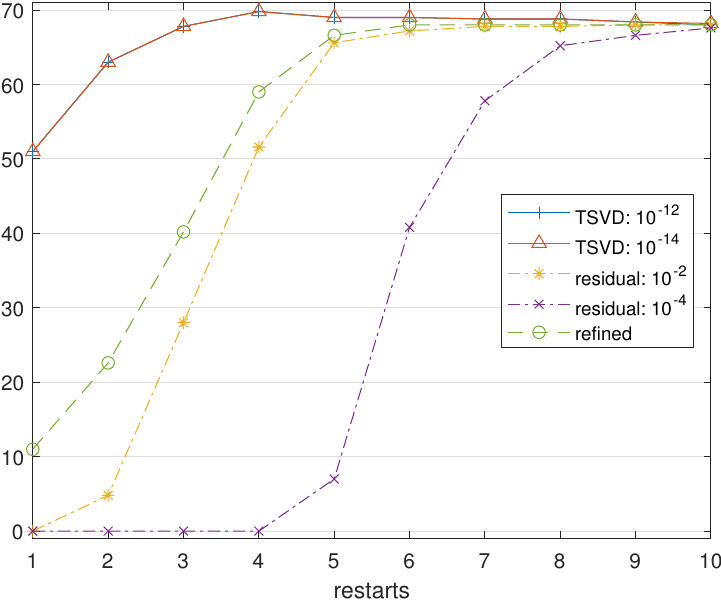}
    \subcaption{Pres\_Poisson with interval I.}
    \label{subfig: average number of different approaches: Pres_Poisson I}
  \end{subfigure}
  \begin{subfigure}[t]{\txtW}
    \includegraphics[width=\textwidth]{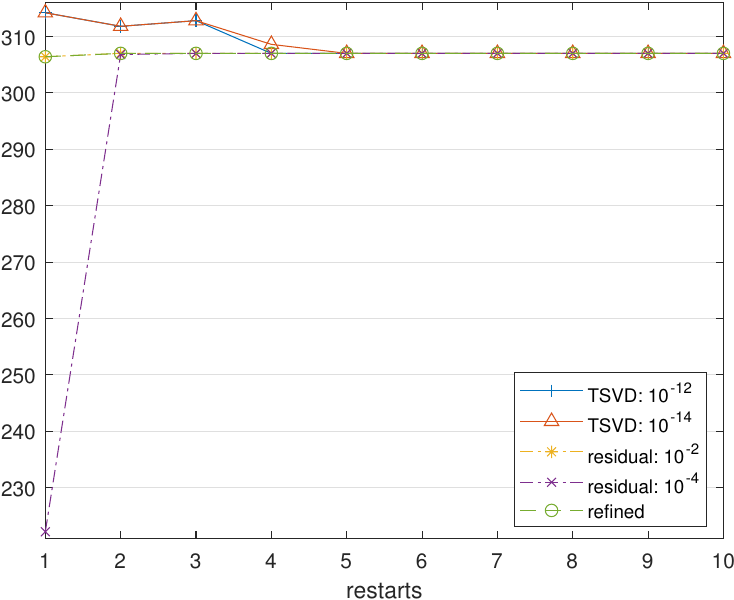}
    \subcaption{Si10H16 with interval I.}
    \label{subfig: average number of different approaches: Si10H16 I}
  \end{subfigure}

  \begin{subfigure}[t]{\txtW}
    \includegraphics[width=\textwidth]{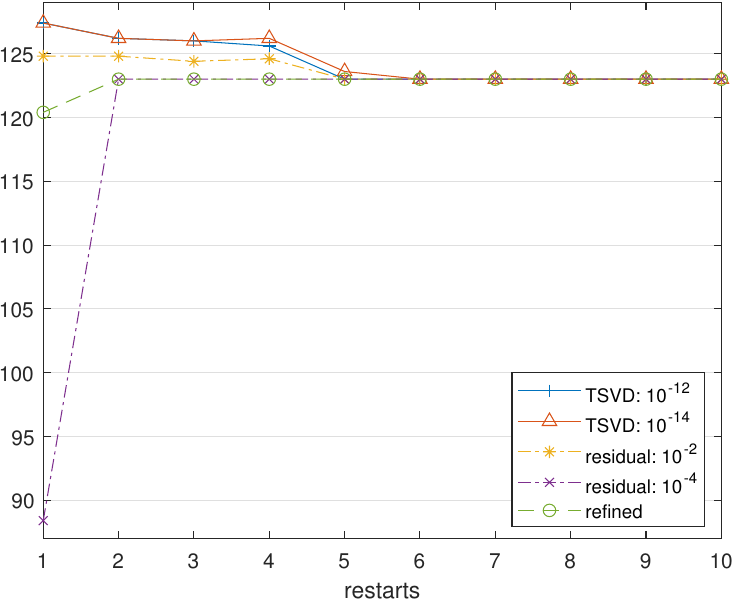}
    \subcaption{SiH4 with interval II.}
    \label{subfig: average number of different approaches: SiH4 II}
  \end{subfigure}
  \begin{subfigure}[t]{\txtW}
    \includegraphics[width=\textwidth]{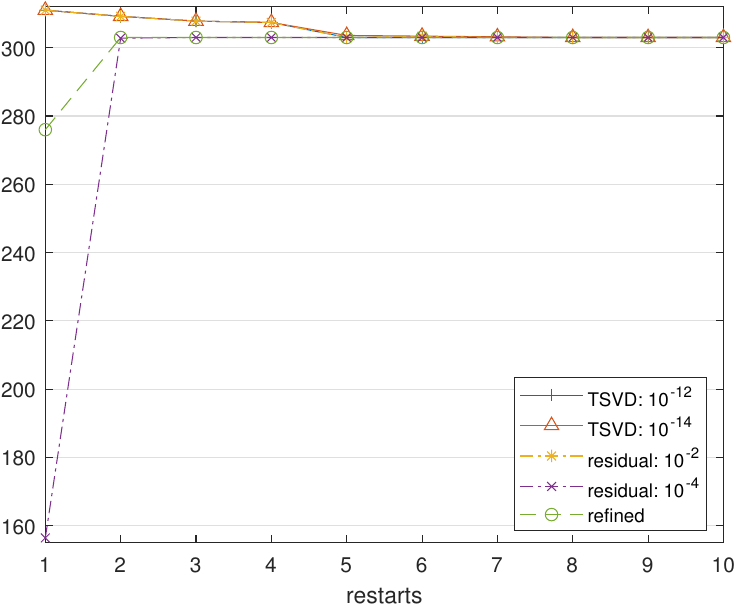}
    \subcaption{Si10H16 with interval II.}
    \label{subfig: average number of different approaches: Si10H16 II}
  \end{subfigure}

  \caption{
    The average numbers of the retained approximate eigenvalues using different removal approaches.
    The label ``TSVD'' denotes the TSVD approach, ``residual'' denotes the residual norm-based removal approach, and ``refined'' denotes the removal approach used for CJ--SS--RRR.
  }
  \label{fig: average number of different approaches}

\end{figure}

\cref{fig: average number of different approaches} shows the average numbers of the retained Ritz and
refined Ritz values using different removal approaches over five runs for four test problems.
\cref{subfig: average number of different approaches: Pres_Poisson I} is for the
test matrix Pres\_Poisson with interval I. We see from it
that, until the 9th restart, the residual norm-based removal approach lost some genuine Ritz values,
and the smaller the threshold $\delta$ is,
the more it lost. In contrast, the TSVD approach and \cref{alg: removal} worked much better, and they
retain $n_{ev}$ approximate eigenpairs from the sixth restart
onwards.

From \cref{subfig: average number of different approaches: SiH4 II,subfig: average number of different approaches: Si10H16 I,subfig: average number of different approaches: Si10H16 II}
for the test matrices SiH4 and Si10H16,
we observe that the refined removal approach
detected the correct numbers $n_{ev}$'s starting from the second restart, much more early than the TSVD
approach and the residual norm-based removal approach. It is clear that, in the first four restarts,
the CJ--SS--RR computed some Ritz values in the intervals, as the curves of the TSVD approach
indicate. In the meantime, \cref{subfig: average number of different approaches: SiH4 II,subfig: average number of different approaches: Si10H16 II} for the
test matrix SiH4 and Si10H16 with interval II
show that if $\delta$ is not small enough then
the spurious Ritz values may not be completely removed by the residual norm-based removal approach,
causing that more than $n_{ev}$ Ritz values were retained.
These observations are in accordance with our previous analysis:
for the Rayleigh--Ritz projection, the residual norm-based removal approach is sensitive to
$\delta$. The fundamental cause is that the Ritz vectors corresponding to a cluster of
nearby Ritz values may or may not be good approximations to desired eigenvectors
as $\epsilon_{ev}\rightarrow 0$,
an intrinsic uncertainty and deficiency of the Rayleigh--Ritz projection method
\cite{jiastewart2001,Stewart_Eigen}.

To compare these removal approaches more comprehensively, we illustrate their performance
for all the test problems in \cref{tab: test matrices}.
% As a reference, we also present the performance of our refined removal approach.
\cref{fig: nin/nev of different approaches} depicts the values $n_{in} / n_{ev}$ for the three removal
approaches.

\begin{figure}[htbp]
  \centering
  \newcommand{\txtW}{0.47\textwidth}

  \begin{subfigure}[t]{\txtW}
    \includegraphics[width=\textwidth]{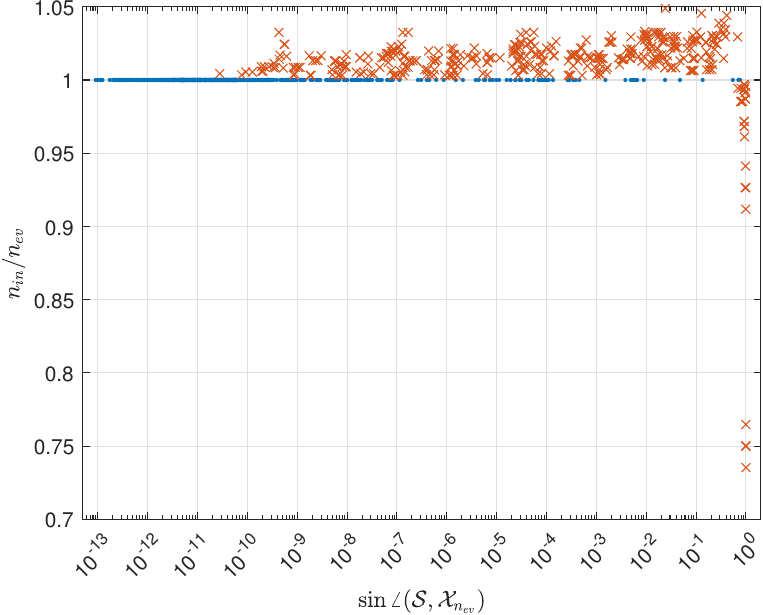}
    \subcaption{TSVD approach with the truncation tolerance $10^{-12}$.}
    \label{subfig: nin/nev of different approaches: TSVD 1e-12}
  \end{subfigure}
  \begin{subfigure}[t]{\txtW}
    \includegraphics[width=\textwidth]{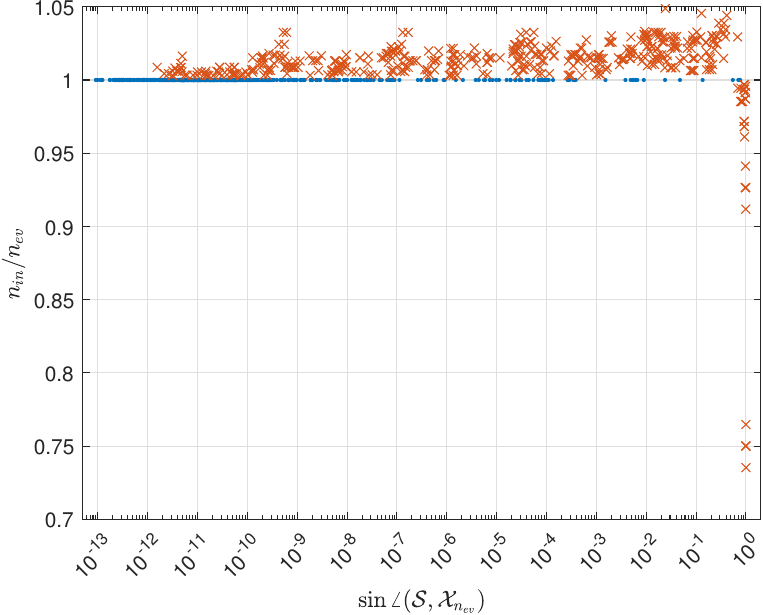}
    \subcaption{TSVD approach with the truncation tolerance $10^{-14}$.}
    \label{subfig: nin/nev of different approaches: TSVD 1e-14}
  \end{subfigure}

  \begin{subfigure}[t]{\txtW}
    \includegraphics[width=\textwidth]{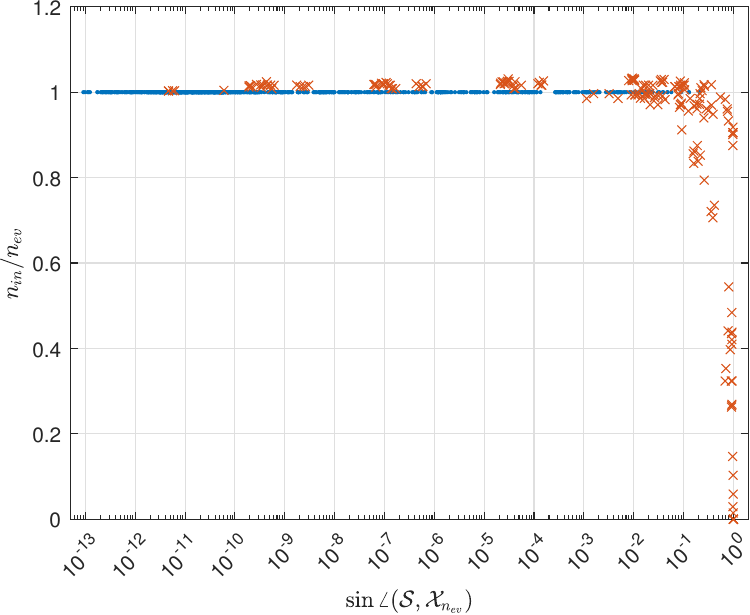}
    \subcaption{Residual norm-based removal with the threshold $\epsilon=10^{-2}$.}
    \label{subfig: nin/nev of different approaches: residual 1e-2}
  \end{subfigure}
  \begin{subfigure}[t]{\txtW}
    \includegraphics[width=\textwidth]{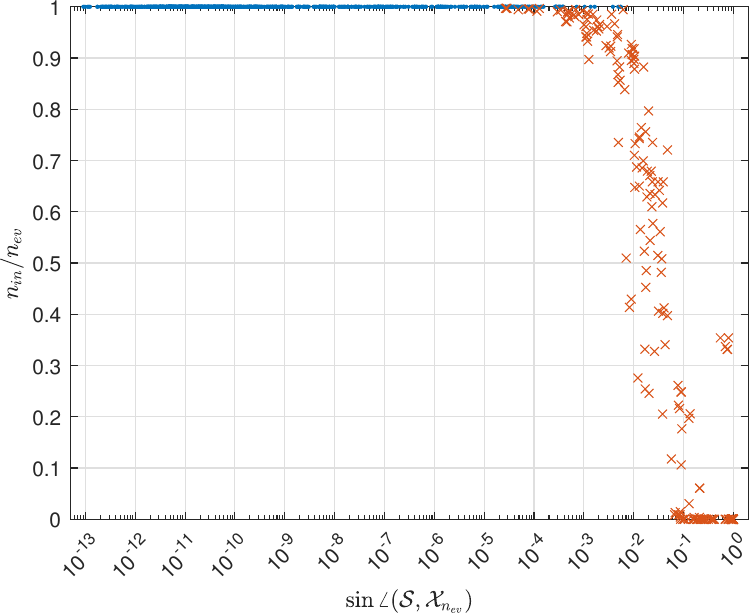}
    \subcaption{Residual norm-based removal with the threshold $\epsilon=10^{-4}$.}
    \label{subfig: nin/nev of different approaches: residual 1e-4}
  \end{subfigure}

  \caption{
    The performance of different removal approaches.
    The horizontal axis is the deviation $\epsilon_{ev}$ in \eqref{deviation},
    and the vertical axis is the ratio $n_{in} / n_{ev}$.
    In each figure, the pairs $(\epsilon_{ev}, n_{in}/n_{ev})$'s of all the
    test matrices and intervals in \cref{tab: test matrices} are plotted together.
    A pair $(\epsilon_{ev}, n_{in}/n_{ev})$ is marked by a
    dot if $n_{in} = n_{ev}$ and by a cross if $n_{in} \neq n_{ev}$.
  }
  \label{fig: nin/nev of different approaches}
\end{figure}

\cref{subfig: nin/nev of different approaches: TSVD 1e-12,subfig: nin/nev of different approaches: TSVD 1e-14}
shows that the CJ--SS--RR algorithm computed a number of spurious Ritz values
even if $\epsilon_{ev}$ dropped below $10^{-9}$.
Therefore, the appearance of spurious Ritz values are not unusual and, in fact,
are frequent events.

\cref{subfig: nin/nev of different approaches: residual 1e-2,subfig: nin/nev of different approaches: residual 1e-4}
show the results of the residual norm-based removal approach.
We can see that this approach with $\delta=10^{-4}$ succeeded to retain
$n_{ev}$ Ritz values in the
interval when the deviation $\epsilon_{ev}$ is small enough,
and $n_{in} = n_{ev}$ always when $\epsilon_{ev} < 10^{-5}$.
Compared with $\delta=10^{-2}$,
it is observed that when $\delta=10^{-4}$,
the number $n_{in}$ of the retained Ritz values is never larger than $n_{ev}$.
This indicates that the residual norm-based removal approach with $\delta=10^{-4}$ does not miss any spurious Ritz values.
For $\delta=10^{-2}$, however,
even when the deviation $\epsilon_{ev}$ is as small as $10^{-11}$, we still had $n_{in}>n_{ev}$,
indicating that the residual norm-based removal approach with $\delta=10^{-2}$ failed
to remove some spurious Ritz values. On the other hand,
for $\delta=10^{-4}$, one could have $n_{in} \ll n_{ev}$ if $\epsilon_{ev}$ is not small.
Thus, the residual norm-based removal approach is sensitive to the threshold $\delta$
and works poorly, and
different $\delta$ may lead to completely different computational results.

In summary, the theoretical analysis and numerical results have demonstrated
that the TSVD approach often does not work well.
For the residual norm-based removal approach,
it is very hard to choose a proper threshold $\delta$ that works well for all problems.
In contrast,
the theory and experiments have validated the effectiveness of
the refined removal approach for the restarted CJ--SS--RRR algorithm. Besides,
the refined removal approach is tune free, and does not need any auxiliary user-prescribed parameters,
making it more robust and user-friendly than the other two removal approaches used in the restarted SS--RR
algorithms.

\subsection{The comparison of CJ--SS--RR and CJ--SS--RRR}

In this subsection,
we compare the performance of the restarted CJ--SS--RR and CJ--SS--RRR algorithms.
% The refined method aims to improve the selection of Ritz vectors by addressing the limitations of the original method.
We test the two algorithms for the moment number $M = 4, 8$
and the dimension $M\ell = 8 \lceil 1.5 n_{ev} / 8 \rceil$ of the search subspace
such that $M\ell \geq 1.5 n_{ev}$.
The series degree $d$ is determined by \cref{eq: degree estimate for practice} with $D = 2$ and $K = 5$.
Additionally,
the tolerance $tol$ for the convergence test in \cref{alg: CJSSRR,alg: CJSSRRR} is set to $10^{-12}$,
and the threshold $\delta$ for the residual norm-based removal approach in \cref{alg: CJSSRR} is set to $10^{-4}$ as in \cite{CJ_SS_RR}.
For each test matrix and interval in \cref{tab: test matrices},
we generated ten random starting matrices $V^{(0)}$ and ran the two algorithms until convergence.

\begin{table}[htbp]
\caption{
  The average restarts used for  restarted CJ--SS--RR and CJ--SS--RRR with $tol = 10^{-12}$.
  The threshold $\delta$ in CJ--SS--RR is $10^{-4}$.
  Both algorithms have converged for all the test problems and obtained $n_{ev}$ converged
  eigenvalues in the interval of interest with the relative residual norms smaller than $tol$.
}
\label{tab: average restartsof CJSSRR and refined CJSSRR}
\centering
\small
\begin{tabular}{clccccc}
\toprule
Interval            & Matrix        & \multicolumn{2}{c}{$M = 4$} &  & \multicolumn{2}{c}{$M = 8$} \\
\cline{3-4}\cline{6-7}
                    &               & CJ--SS--RR     & CJ--SS--RRR   &  & CJ--SS--RR     & CJ--SS--RRR           \\
\midrule
\multirow{7}{*}{I}  & SiH4          & 3.1            & 3.1           &  & 3.1                & 3                 \\
                    & SiNa          & 4              & 4             &  & 4                  & 3.4               \\
                    & delaunay\_n13 & 4              & 4             &  & 4                  & 3.4               \\
                    & benzene       & 4              & 3.6           &  & 3.8                & 3                 \\
                    & stokes64      & 3.2            & 3             &  & 3                  & 3                 \\
                    & Pres\_Poisson & 6.7            & 6.5           &  & 5                  & 5                 \\
                    & Si10H16       & 4              & 3             &  & 3.5                & 3                 \\
\midrule
\multirow{7}{*}{II} & SiH4          & 4              & 3             &  & 3.4                & 3                 \\
                    & SiNa          & 4.4            & 4             &  & 4.1                & 3.2               \\
                    & delaunay\_n13 & 5.7            & 5             &  & 5.1                & 4                 \\
                    & benzene       & 4.6            & 4             &  & 4.2                & 3                 \\
                    & stokes64      & 5.4            & 5             &  & 4.5                & 4                 \\
                    & Pres\_Poisson & 4.3            & 4             &  & 3.9                & 3.1               \\
                    & Si10H16       & 4.3            & 3.4           &  & 4.1                & 3                 \\
\bottomrule
\end{tabular}

\end{table}

\cref{tab: average restartsof CJSSRR and refined CJSSRR} shows the average restarts
used for the convergence of the two restarted algorithms.
From \cref{tab: average restartsof CJSSRR and refined CJSSRR},
we can see that for all the test problems, CJ--SS--RRR
did not require more restarts to converge than CJ--SS--RR,
and in most cases, it consumed fewer restarts to converge.
This is due to the fact that the residual norms of the refined Ritz pairs are smaller than those of the Ritz pairs, making CJ--SS--RRR converge more quickly.
Moreover, since the cost of constructing the moment matrix $S$ in \cref{eq: def: polynomial approximation of S} overwhelms the other computational,
CJ--SS--RRR not only improved the accurate determination of the $n_{ev}$ desired eigenvalues
but also is more efficient.

\section{Conclusion}\label{sec:7}

Based on the refined Rayleigh--Ritz projection, we have proposed the SS--RRR methods,
which includes the CJ--SS--RRR method as a special instance. We have presented
an effective and robust removal approach to identifying
spurious Ritz values and refined Ritz vectors and to reliably
retaining all the genuine (refined) Ritz values and removing
extra refined Ritz vectors. Such removal approach is much more effective and robust
than the TSVD removal approach and
the residual norm-based removal approach in restarted SS-RR algorithms, and it is
tune free and easily implemented.

We have conducted detailed
numerical experiments on several test problems to demonstrate the performance of
CJ--SS--RRR, showing that the refined removal approach
removed the extra refined Ritz vectors reliably as (refined) Ritz values started to
converge, while the TSVD approach and residual norm-based removal approaches either failed or were
susceptible to truncation tolerances or user-prescribed thresholds $\delta$.
Furthermore, CJ--SS--RRR outperforms CJ--SS--RR
and is a more effective and efficient eigensolver
for computing partial eigenpairs of large-scale Hermitian matrices than CJ--SS--RR.

%\bibliographystyle{siamplain}
%\bibliography{references}

\begin{thebibliography}{10}

\bibitem{bai2000}
{\sc Z.~Bai, J.~Demmel, J.~Dongarra, A.~Ruhe, and H.~A. Van~der Vorst}, {\em
  Templates for the Solution of Algebraic Eigenvalue Problems: A Practical
  Guide}, SIAM, Philadelphia, PA, 2000,
  \url{https://doi.org/10.1137/1.9780898719581}.

\bibitem{matrix_collection}
{\sc T.~A. Davis and Y.~Hu}, {\em The {U}niversity of {Florida} sparse matrix
  collection}, ACM Trans. Math. Software, 38 (2011),
  \url{https://doi.org/10.1145/2049662.2049663}.

\bibitem{eig_count_Saad}
{\sc E.~Di~Napoli, E.~Polizzi, and Y.~Saad}, {\em Efficient estimation of
  eigenvalue counts in an interval}, Numer. Linear Algebra Appl., 23 (2016),
  pp.~674--692, \url{https://doi.org/10.1002/nla.2048}.

\bibitem{SS_package}
{\sc Y.~Futamura and T.~Sakurai}, {\em {z-Pares}: Parallel eigenvalue solver},
  2014, \url{https://zpares.cs.tsukuba.ac.jp/}.

\bibitem{Golub_Matrix}
{\sc G.~H. Golub and C.~F. Van~Loan}, {\em Matrix Computations}, The John
  Hopkins University Press, Baltimore, 4th~ed., 2013,
  \url{https://doi.org/10.1137/1.9781421407944}.

\bibitem{FEAST_Zolotarev}
{\sc S.~G\"{u}ttel, E.~Polizzi, P.~T.~P. Tang, and G.~Viaud}, {\em {Zolotarev}
  quadrature rules and load balancing for the {FEAST} eigensolver}, SIAM J.
  Sci. Comput., 37 (2015), pp.~A2100--A2122,
  \url{https://doi.org/10.1137/140980090}.

\bibitem{block_SS_RR}
{\sc T.~Ikegami and T.~Sakurai}, {\em Contour integral eigensolver for
  {non-Hermitian} systems: a {Rayleigh--Ritz-type} approach}, Taiwanese J.
  Math., 14 (2010), pp.~825--837,
  \url{https://doi.org/10.11650/twjm/1500405869}.

\bibitem{SS_theory2010}
{\sc T.~Ikegami, T.~Sakurai, and U.~Nagashima}, {\em A filter diagonalization
  for generalized eigenvalue problems based on the {Sakurai--Sugiura}
  projection method}, J. Comput. Appl. Math., 233 (2010), pp.~1927--1936,
  \url{https://doi.org/10.1016/j.cam.2009.09.029}.

\bibitem{SS_Arnoldi}
{\sc A.~Imakura, L.~Du, and T.~Sakurai}, {\em A block {Arnoldi-type} contour
  integral spectral projection method for solving generalized eigenvalue
  problems}, Appl. Math. Lett., 32 (2014), pp.~22--27,
  \url{https://doi.org/10.1016/j.aml.2014.02.007}.

\bibitem{SS_theory2016}
{\sc A.~Imakura, L.~Du, and T.~Sakurai}, {\em Error bounds of {Rayleigh--Ritz}
  type contour integral-based eigensolver for solving generalized eigenvalue
  problems}, Numer. Algorithms, 71 (2016), pp.~103--120,
  \url{https://doi.org/10.1007/s11075-015-9987-4}.

\bibitem{SS_review}
{\sc A.~Imakura, L.~Du, and T.~Sakurai}, {\em Relationships among contour
  integral-based methods for solving generalized eigenvalue problems}, Jpn. J.
  Ind. Appl. Math., 33 (2016), pp.~721--750,
  \url{https://doi.org/10.1007/s13160-016-0224-x}.

\bibitem{SS_2023}
{\sc A.~Imakura and T.~Sakurai}, {\em Complex moment-based eigensolver coupled
  with two {Krylov} subspaces}, J. Comput. Appl. Math., 432 (2023), 115283,
  \url{https://doi.org/10.1016/j.cam.2023.115283}.

\bibitem{Jackson_damp}
{\sc L.~O. Jay, H.~Kim, Y.~Saad, and J.~R. Chelikowsky}, {\em Electronic
  structure calculations for plane-wave codes without diagonalization}, Comput.
  Phys. Commun., 118 (1999), pp.~21--30,
  \url{https://doi.org/10.1016/S0010-4655(98)00192-1}.

\bibitem{jia1997refined}
{\sc Z.~Jia}, {\em Refined iterative algorithms based on {Arnoldi}'s process
  for large unsymmetric eigenproblems}, Linear Algebra Appl., 259 (1997),
  pp.~1--23, \url{https://doi.org/10.1016/S0024-3795(96)00238-8}.

\bibitem{jia1998refined}
{\sc Z.~Jia}, {\em A refined iterative algorithm based on the block {Arnoldi}
  process for large unsymmetric eigenproblems}, Linear Algebra Appl., 270
  (1998), pp.~171--189, \url{https://doi.org/10.1016/S0024-3795(97)00023-2}.

\bibitem{jia_multiple}
{\sc Z.~Jia}, {\em {Arnoldi} type algorithms for large unsymmetric multiple
  eigenvalue problems}, J. Comput. Math., 17 (1999), pp.~257--274,
  \url{https://doi.org/10.1006/jagm.1998.0996}.

\bibitem{jia1999refined}
{\sc Z.~Jia}, {\em Polynomial characterizations of the approximate eigenvectors
  by the refined {Arnoldi} method and an implicitly restarted refined {Arnoldi}
  algorithm}, Linear Algebra Appl., 287 (1999), pp.~191--214,
  \url{https://doi.org/10.1016/S0024-3795(98)10197-0}.

\bibitem{jia2000refined}
{\sc Z.~Jia}, {\em A refined subspace iteration algorithm for large sparse
  eigenproblems}, Appl. Numer. Math., 32 (2000), pp.~35--52,
  \url{https://doi.org/10.1016/S0168-9274(99)00008-2}.

\bibitem{refined_RR_theory}
{\sc Z.~Jia}, {\em Some theoretical comparisons of refined {Ritz} vectors and
  {Ritz} vectors}, Sci. China Ser. A, 47 (2004), pp.~222--233,
  \url{https://doi.org/10.1360/04za0020}.

\bibitem{CJ_SS_RR}
{\sc Z.~Jia and T.~Liu}, {\em A {Chebyshev--Jackson} series based block
  {SS--RR} algorithm for computing partial eigenpairs of real symmetric
  matrices}, arXiv preprint arXiv:2508.20456,  (2025).

\bibitem{jiastewart2001}
{\sc Z.~Jia and G.~W. Stewart}, {\em An analysis of the {Rayleigh--Ritz} method
  for approximating eigenspaces}, Math. Comput., 70 (2001), pp.~637--647,
  \url{https://doi.org/10.1090/S0025-5718-00-01208-4}.

\bibitem{CJ_FEAST_cross}
{\sc Z.~Jia and K.~Zhang}, {\em A {FEAST} {SVDsolver} based on
  {Chebyshev--Jackson} series for computing partial singular triplets of large
  matrices}, J. Sci. Comput., 97 (2023), 21,
  \url{https://doi.org/10.1007/s10915-023-02342-y}.

\bibitem{CJ_FEAST_augmented}
{\sc Z.~Jia and K.~Zhang}, {\em An augmented matrix-based {CJ-FEAST}
  {SVDsolver} for computing a partial singular value decomposition with the
  singular values in a given interval}, SIAM J. Matrix Anal. Appl., 45 (2024),
  pp.~24--58, \url{https://doi.org/10.1137/23M1547500}.

\bibitem{CJ_FEAST_GSVD}
{\sc Z.~Jia and K.~Zhang}, {\em A {CJ-FEAST GSVD}solver for computing a partial
  gsvd of a large matrix pair with the generalized singular values in a given
  interval}, Numer. Math., 157 (2025), pp.~897--949,
  \url{https://doi.org/10.1007/s00211-025-01466-7}.

\bibitem{Jin_FEM_EM}
{\sc J.-M. Jin}, {\em The Finite Element Method in Electromagnetics}, Jhon
  Wiley \& Sons, Inc., Hoboken, NJ, 3rd~ed., 2014.

\bibitem{FEAST_non_Hermitian}
{\sc J.~Kestyn, E.~Polizzi, and P.~T. Peter~Tang}, {\em {FEAST} eigensolver for
  {non-Hermitian} problems}, SIAM J. Sci. Comput., 38 (2016), pp.~S772--S799,
  \url{https://doi.org/10.1137/15M1026572}.

\bibitem{Martin_Electronic_Structure}
{\sc R.~M. Martin}, {\em Electronic Structure: Basic Theory and Practical
  Methods}, Cambridge University Press, Cambridge, UK, 2nd~ed., 2020,
  \url{https://doi.org/10.1017/9781108555586}.

\bibitem{Parlett_Symmetric}
{\sc B.~N. Parlett}, {\em The Symmetric Eigenvalue Problem}, SIAM,
  Philadelphia, PA, 1998, \url{https://doi.org/10.1137/1.9781611971163}.

\bibitem{FEAST_SI}
{\sc P.~T. Peter~Tang and E.~Polizzi}, {\em {FEAST} as a subspace iteration
  eigensolver accelerated by approximate spectral projection}, SIAM J. Matrix
  Anal. Appl., 35 (2014), pp.~354--390,
  \url{https://doi.org/10.1137/13090866X}.

\bibitem{FEAST2009}
{\sc E.~Polizzi}, {\em Density-matrix-based algorithm for solving eigenvalue
  problems}, Phys. Rev. B, 79 (2009), 115112,
  \url{https://doi.org/10.1103/PhysRevB.79.115112}.

\bibitem{FEAST_package}
{\sc E.~Polizzi}, {\em {FEAST} eigenvalue solver v4.0 user guide}, 2020,
  \url{https://doi.org/10.48550/arXiv.2002.04807}.

\bibitem{ApproxIntroduction}
{\sc T.~J. Rivlin}, {\em An Introduction to the Approximation of Functions},
  Dover, New York, 1981.

\bibitem{Saad_Linear}
{\sc Y.~Saad}, {\em Iterative Methods for Sparse Linear Systems}, SIAM,
  Philadelphia, PA, 2nd~ed., 2003,
  \url{https://doi.org/10.1137/1.9780898718003}.

\bibitem{Saad_Eigen}
{\sc Y.~Saad}, {\em Numerical Methods for Large Eigenvalue Problems, Revised
  Edition}, SIAM, Philadelphia, PA, 2011,
  \url{https://doi.org/10.1137/1.9781611970739}.

\bibitem{SS_restart1}
{\sc T.~Sakurai, Y.~Futamura, and H.~Tadano}, {\em Efficient parameter
  estimation and implementation of a contour integral-based eigensolver}, J.
  Algorithms Comput. Technol., 7 (2013), pp.~249--269,
  \url{https://doi.org/10.1260/1748-3018.7.3.249}.

\bibitem{SS_Hankel}
{\sc T.~Sakurai and H.~Sugiura}, {\em A projection method for generalized
  eigenvalue problems using numerical integration}, J. Comput. Appl. Math., 159
  (2003), pp.~119--128, \url{https://doi.org/10.1016/S0377-0427(03)00565-X}.

\bibitem{SS_RR}
{\sc T.~Sakurai and H.~Tadano}, {\em {CIRR}: a {Rayleigh--Ritz} type method
  with contour integral for generalized eigenvalue problems}, Hokkaido Math.
  J., 36 (2007), pp.~745--757, \url{https://doi.org/10.14492/hokmj/1272848031}.

\bibitem{Stewart_Eigen}
{\sc G.~W. Stewart}, {\em Matrix Algorithms, Volume II: Eigensystems}, SIAM,
  Philadelphia, PA, 2001, \url{https://doi.org/10.1137/1.9780898718058}.

\bibitem{szyld2006}
{\sc D.~B. Szyld}, {\em The many proofs of an identity on the norm of oblique
  projections}, Numer. Algor., 42 (2006), pp.~309--323,
  \url{https://doi.org/10.1007/s11075-006-9046-2}.

\bibitem{Trefethen_Embree}
{\sc L.~N. Trefethen and M.~Embree}, {\em Spectra and Pseudospectra: The
  Behavior of Nonnormal Matrices and Operators}, Princeton University Press,
  Princeton, 2005, \url{https://doi.org/10.1515/9780691213101}.

\bibitem{book:VanDerVorst2002}
{\sc H.~A. van~der Vorst}, {\em {Computational Methods for Large Eigenvalue
  Problems}}, Handbook of Numerical Analysis, Vol. VIII, edited by P. G.
  Ciarlet and J. L. Lions, Elsvier, 2002,
  \url{https://doi.org/10.1016/S1570-8659(02)08003-1}.

\bibitem{FEAST_oblique}
{\sc G.~Yin, R.~H. Chan, and M.-C. Yeung}, {\em A {FEAST} algorithm with
  oblique projection for generalized eigenvalue problems}, Numer. Linear
  Algebra Appl., 24 (2017), e2092, \url{https://doi.org/10.1002/nla.2092}.

\end{thebibliography}

\end{document}